\newtheorem{theorem}{Theorem}[section]
\newtheorem{lemma}[theorem]{Lemma}
\newtheorem{corollary}[theorem]{Corollary}
\newtheorem{proposition}[theorem]{Proposition}
\theoremstyle{definition}
\theoremstyle{remark}
\newtheorem{remark}[theorem]{Remark}
\numberwithin{equation}{section}
\begin{document}

\title[Kloosterman Sums with Trace Nonzero Square Arguments]{$\begin{array}{c}
         \text{Ternary Codes Associated with $O(3,3^r)$ and }\\
           \text{Power Moments of Kloosterman Sums with}\\
           \text{Trace Nonzero Square Arguments}
       \end{array}$
}

\author{dae san kim}
\address{Department of Mathematics, Sogang University, Seoul 121-742, Korea}
\curraddr{Department of Mathematics, Sogang University, Seoul
121-742, Korea} \email{dskim@sogong.ac.kr}
\thanks{This work was supported by National Research Foundation of Korea Grant funded by
the Korean Government 2009-0072514.}

\subjclass[2000]{}

\date{}

\dedicatory{ }

\keywords{}

\begin{abstract}
In this paper, we construct two ternary linear codes $C(SO(3,q))$
and $C(O(3,q))$, respectively associated with the orthogonal groups
$SO(3,q)$ and $O(3,q)$. Here $q$ is a power of three. Then we obtain
two recursive formulas for the power moments of  Kloosterman sums
with $\lq\lq$trace nonzero square arguments" in terms of the
frequencies of weights in the codes. This is done via Pless power
moment identity and by utilizing the explicit expressions of Gauss
sums for the orthogonal groups.\\

Index terms - power moment,  Kloosterman sum,  trace  nonzero square
argument, orthogonal group, Pless power moment identity, weight
distribution, Gauss sum.\\

MSC 2000: 11T23, 20G40, 94B05.
\end{abstract}

\maketitle
\section{Introduction}

Let $\psi$ be a nontrivial additive character of the finite field
$\mathbb{F}_q$ with $q=p^r$ elements ($p$ a prime). Then the
 Kloosterman sum $K(\psi ; a)$(\cite{LN1}) is defined by

\begin{align*}
K(\psi ; a)=\sum_{\alpha \in \mathbb{F}_q^*} \psi(\alpha + a
\alpha^{-1})~(a \in \mathbb{F}_q^*).
\end{align*}
The Kloosterman sum was introduced in 1926(\cite{K1}) to give an
estimate for the Fourier coefficients of modular forms.

For each nonnegative integer $h$, by $MK(\psi)^h$ we will denote the
$h$-th moment of the Kloosterman sum $K(\psi;a)$. Namely, it is given by

\begin{align*}
MK(\psi)^h=\sum_{\alpha \in \mathbb{F}_q^*}K(\psi;a)^h.
\end{align*}
If $\psi=\lambda$ is the canonical additive character of
$\mathbb{F}_q$, then $MK(\lambda)^h$ will be simply denoted by
$MK^h$.

Explicit computations on power moments of Kloosterman sums were
begun with the paper {\cite{S1}} of Sali$\acute{e}$  in 1931, where
he showed, for any odd prime $q$,

\begin{align*}
MK^h=q^2 M_{h-1}-(q-1)^{h-1}+2(-1)^{h-1}~(h\geq1).
\end{align*}
Here $M_0=0$, and, for $h \in \mathbb{Z}_{>0}$,
\begin{align*}
M_h=|\{(\alpha_1,\cdots,\alpha_h)\in
(\mathbb{F}_q^*)^h|\sum_{j=1}^{h}\alpha_j=1=\sum_{j=1}^h
\alpha_j^{-1}\}|.
\end{align*}
For $q=p$ odd prime, Sali$\acute{e}$ obtained $MK^1$, $MK^2$,
$MK^3$, $MK^4$ in \cite{S1} by determining $M_1$, $M_2$, $M_3$. On
the other hand, $MK^5$ can be expressed in terms of the $p$-th
eigenvalue for a weight $3$ newform on $\Gamma_0(15)$ (cf.
\cite{L1}, \cite{PV1}). $MK^6$ can be expressed in terms of the
$p$-th eigenvalue for a weight $4$ newform on $\Gamma_0(6)$
(cf.\cite{HS1}). Also, based on numerical evidence, in \cite{E1}
Evans was led to propose a conjecture which expresses $MK^7$ in
terms of Hecke eigenvalues for a weight $3$ newform on
$\Gamma_0(525)$ with quartic nebentypus of conductor 105.

From now on, let us assume that $q=3^r$. Recently, Moisio was able
to find explicit expressions of $MK^h$, for $h\leq10$ (cf.
\cite{M1}). This was done, via Pless power moment identity, by
connecting moments of Kloosterman sums and the frequencies of
weights in the ternary Melas code of length $q-1$, which were
known by the work of Geer, Schoof and Vlugt in \cite{GS1}.

In order to describe our results, we introduce three incomplete
power moments of Kloosterman sums. For every nonnegative integer
$h$, and $\psi$ as before, we define

\begin{equation}\label{a}
T_0 SK(\psi)^h=\sum_{a\in\mathbb{F}_{q}^{*},~tra=0}K(\psi;a^2)^h,~
T_{12}
SK(\psi)^h=\sum_{a\in\mathbb{F}_{q}^{*},~tra\neq0}K(\psi;a^2)^h,
\end{equation}
which will be respectively  called the $h$-th moment of Kloosterman
sums with $\lq\lq$trace zero square arguments" and those with
$\lq\lq$trace nonzero square arguments." Then, clearly we have
\begin{equation}\label{b}
2SK(\psi)^h=T_0SK(\psi)^h+T_{12}SK(\psi)^h,
\end{equation}
where
\begin{equation}\label{c}
SK(\psi)^h=\sum_{a\in\mathbb{F}_{q}^{*},~a~square}K(\psi;a)^h,
\end{equation}
which is called the $h$-th moment of Kloosterman sums with
$\lq\lq$square arguments." If $\psi=\lambda$ is the canonical
additive character of $\mathbb{F}_{q}^{}$, then $SK(\lambda)^h$,
$T_{0}SK(\lambda)^h$, and $T_{12}SK(\lambda)^h$ will be respectively
denoted by $SK^h$, $T_{0}SK^h$ and $T_{12}SK^h$, for brevity.

We derived recursive formulas generating the odd power moments of
Kloosterman sums with trace one arguments in \cite{D2} and
\cite{D5}. To do that we constructed binary linear codes associated
with $O(3,2^r)$ and with double cosets with respect to certain
maximal parabolic subgroup of $O(2n+1,2^r)$.

 In this paper, we will show the main Theorem \ref{A} giving recursive
formulas for the power moments of Kloosterman sums with
$\lq\lq$trace nonzero square arguments." To do that, we construct
ternary linear codes $C(SO(3,q))$ and $C(O(3,q))$, respectively
associated with the orthogonal groups $SO(3,q)$ and $O(3,q)$, and
express those power moments in terms of the frequencies of weights
in the codes. Then, thanks to our previous results on the explicit
expressions of $\lq\lq$Gauss sums" for the orthogonal  group
$O(2n+1,q)$ \cite{D1}, we can express the weight of each codeword in
the duals of the codes in terms of Kloosterman sums. Then our
formulas will follow immediately from the Pless power moment
identity.

Henceforth, we agree that, for nonnegative integers $a,b,c$,
\begin{equation}\label{d}
{\binom{c}{a,b}}={\frac{c!}{a!~b!~(c-a-b)!}},~if~a+b\leq c,
\end{equation}
and
\begin{equation}\label{e}
{\binom{c}{a,b}}=0,~if~a+b>c.
\end{equation}
\indent

\begin{theorem}\label{A}
Let $q=3^r$. Then we have the following.

$(1)$ For $h=1,2,3,\cdots,$
\begin{align}\label{f}
\begin{split}
&((-1)^{h+1}+2^{-h})T_{12}SK^h\\
&=-\sum_{j=1}^{h-1}((-1)^{j+1}+2^{-j}){\binom{h}{j}}(q^2-1)^{h-j}T_{12}SK^j\\
&\quad+q^{1-h}\sum_{j=0}^{min\{N_1,h\}}(-1)^j(C_{1,j}-\hat{C}_j)\sum_{t=j}^{h}t!
S(h,t)3^{h-t}2^{t-h-j}{\binom{N_1-j}{N_1-t}},
\end{split}
\end{align}
where $N_1=|SO(3,q)|=q(q^2-1)$, and $\{C_{1,j}\}_{j=0}^{N_1}$ and
$\{\hat{C}_{j}\}_{j=0}^{N_1}$ are respectively the weight
distributions of $C(SO(3,q))$ and $C(Sp(2,q))$ given by: for
$j=0,\cdots,N_1$,
\begin{align}\label{g}
\begin{split}
C_{1,j}=&\sum{\binom{q^2}{\nu_0,\mu_0}}{\binom{q^2}{\nu_2,\mu_2}}\\
& \qquad \times\prod_{\beta^2-2\beta\neq0~square}
{\binom{q^2+q}{\nu_\beta,\mu_\beta}}
\prod_{\beta^2-2\beta~nonsquare}
{\binom{q^2-q}{\nu_\beta,\mu_\beta}},
\end{split}
\end{align}
\begin{align}\label{h}
\begin{split}
\hat{C}_{j}=&\sum{\binom{q^2}{\nu_1,\mu_1}}{\binom{q^2}{\nu_{-1},\mu_{-1}}}\\
& \qquad \times\prod_{\beta^2-1\neq0~square}
{\binom{q^2+q}{\nu_\beta,\mu_\beta}} \prod_{\beta^2-1~nonsquare}
{\binom{q^2-q}{\nu_\beta,\mu_\beta}}.
\end{split}
\end{align}
Here the first sum in (\ref{f}) is 0 if $h=1$ and the unspecified
sums in (\ref{g}) and (\ref{h}) run over all the sets of nonnegative
integers $\{\nu_\beta\}_{\beta\in\mathbb{F}_q}$ and
$\{\mu_\beta\}_{\beta\in\mathbb{F}_q}$ satisfying
\begin{align*}
\sum_{\beta\in\mathbb{F}_q}\nu_\beta+\sum_{\beta\in\mathbb{F}_q}\mu_\beta=j,~
and~ \sum_{\beta\in\mathbb{F}_q}\nu_\beta
\beta=\sum_{\beta\in\mathbb{F}_q}\mu_\beta \beta.
\end{align*}
In addition, $S(h,t)$ is the Stirling number of the second kind
defined by
\begin{align}\label{i}
S(h,t)=\frac{1}{t!}\sum_{j=0}^{t}(-1)^{t-j}{\binom{t}{j}}j^h.
\end{align}

$(2)$ For $h=1,2,3,\cdots,$

\begin{align}\label{j}
\begin{split}
&((-1)^{h+1}+2^{-h})T_{12}SK^h\\
&=-\sum_{j=1}^{h-1}((-1)^{j+1}+s^{-j}){\binom{h}{j}}(q^2-1)^{h-j}T_{12}SK^j\\
&\quad +q^{1-h}\sum_{j=0}^{min\{N_2,h\}}(-1)^j
C_{2,j}\sum_{t=j}^{h}t!S(h,t)3^{h-t}2^{t-2h-j}
{\binom{N_2-j}{N_2-t}}\\
&\quad -q^{1-h}\sum_{j=0}^{min\{N_1,h\}}(-1)^j
\hat{C}_{j}\sum_{t=j}^{h}t!S(h,t)3^{h-t}2^{t-h-j}
{\binom{N_1-j}{N_1-t}},
\end{split}
\end{align}
where $N_2=|O(3,q)|=2q(q^2-1)$, and $\{C_{2,j}\}_{j=0}^{N_1}$ is the
weight distribution of $C(O(3,q))$ given by: for $j=0,\cdots,N_2$,
\begin{align}\label{k}
\begin{split}
C_{2,j}=&\sum\prod_{\beta\in\mathbb{F}_q}{\binom{n_2(\beta)}{\nu_\beta,\mu_\beta}}
~(j=0,\cdots,N_2),\\
&\quad with~
n_2(\beta)=2q^2-2q+q\delta(1,q;\beta-1)+q\delta(1,q;\beta+1).
\end{split}
\end{align}
Here the first sum in (\ref{j}) is 0 if $h=1$, the unspecified sum
in (\ref{k}) runs over all the sets of nonnegative integers
$\{\nu_\beta\}_{\beta\in\mathbb{F}_q}$ and
$\{\mu_\beta\}_{\beta\in\mathbb{F}_q}$ satisfying
\begin{align*}
\sum_{\beta\in\mathbb{F}_q}\nu_\beta+\sum_{\beta\in\mathbb{F}_q}\mu_\beta=j,\quad
~and~ \sum_{\beta\in\mathbb{F}_q}\nu_\beta
\beta=\sum_{\beta\in\mathbb{F}_q}\mu_\beta \beta,
\end{align*}
$S(h,t)$ indicates the Stirling number of the second as in
(\ref{i}), $\hat{C}_j$'s are as in (\ref{h}), and
\begin{equation}\label{l}
\begin{split}
\delta(1,q;\beta)&=|\{x\in\mathbb{F}_{q}^{}|x^2-\beta x+1=0\}|\\
&=
\begin{cases}
2,& \text {if $\beta^2-1\neq0$ is a square,}\\
1,& \text {if $\beta^2-1=0$,}\\
0,& \text {if $\beta^2-1$ is a nonsquare.}
\end{cases}
\end{split}
\end{equation}
\end{theorem}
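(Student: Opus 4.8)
The plan is to follow the strategy indicated in the introduction: construct the ternary linear codes $C(SO(3,q))$ and $C(O(3,q))$ whose coordinates are indexed by the group elements, then compute the weight distributions of their duals in terms of Kloosterman sums using the explicit Gauss sum expressions from \cite{D1}, and finally feed everything into the Pless power moment identity.

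\emph{Step 1: Defining the codes and the auxiliary code $C(Sp(2,q))$.} First I would recall from \cite{D1} the classification of conjugacy classes of $SO(3,q)$ and $O(3,q)$ parametrized by the ``trace'' value $\beta\in\mathbb{F}_q$ of the associated element, together with the sizes of the corresponding classes — this is exactly what produces the multiplicities $q^2$, $q^2+q$, $q^2-q$ appearing in \eqref{g}, \eqref{h}, \eqref{k}; here the quantity $\delta(1,q;\beta)$ of \eqref{l} records how the quadratic $x^2-\beta x+1$ splits. The code $C(SO(3,q))$ (respectively $C(O(3,q))$) is the ternary code spanned by the rows of a matrix whose columns run over the nonidentity elements of $SO(3,q)$ (resp.\ $O(3,q)$), with the entry in a given row determined by a fixed additive-character/trace construction. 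The point of also introducing $C(Sp(2,q))$ is that the defining equation for $SO(3,q)$ decomposes (via the exceptional isomorphism $SO(3,q)\cong PGL(2,q)$ and the relation between $O(3,q)$, $SO(3,q)$ and $Sp(2,q)$) so that the generating function for weights of $C(SO(3,q))$ differs from that of $C(Sp(2,q))$ by the ``shift'' $\beta\mapsto\beta-2$ visible when one compares \eqref{g} with \eqref{h}; I would make this correspondence precise.

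\emph{Step 2: Weights of dual codewords as Kloosterman sums.} For a codeword of the dual code $C(O(3,q))^{\perp}$ (and similarly for the other duals), its Hamming weight is $N_2$ minus a character sum over the group, which by the block decomposition of the group according to the trace parameter becomes a sum over $\beta\in\mathbb{F}_q$ of $n_2(\beta)$ weighted by a one-dimensional Gauss-type sum; invoking the explicit Gauss sum formulas for $O(2n+1,q)$ from \cite{D1} in the rank-one case, each such inner sum evaluates in terms of a Kloosterman sum $K(\lambda;a^2)$ with $\operatorname{tr}a\ne0$. Summing over a codeword's support and using \eqref{l}, the weight of a typical nonzero dual codeword comes out as a fixed affine function of $T_{12}SK^1$ (times $\pm q^{\text{something}}$), and one reads off the full weight distribution of the dual from the combinatorial description of how many codewords give each value — this is where the multinomial sums over $\{\nu_\beta\},\{\mu_\beta\}$ subject to $\sum\nu_\beta+\sum\mu_\beta=j$ and $\sum\nu_\beta\beta=\sum\mu_\beta\beta$ come from (the second condition being the orthogonality/dual condition).

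\emph{Step 3: Pless power moment identity.} With the weight distribution $\{C_{i,j}\}$ of each code (equivalently, the dual distribution via MacWilliams), I would apply the Pless power moment identity, which expresses $\sum_j C_{i,j}\,j^h$-type sums in terms of the moments $\sum_w A_w^{\perp} w^{\,\cdot}$ of the dual; expanding $j^h$ in falling factorials produces the Stirling numbers $S(h,t)$ and the binomial factors $\binom{N_i-j}{N_i-t}$, while the dual-weight side, being an affine function of $T_{12}SK^1$, contributes — after raising to the $h$-th power and expanding — all the lower moments $T_{12}SK^j$ for $1\le j\le h$. Collecting the $T_{12}SK^h$ term, whose coefficient is the $((-1)^{h+1}+2^{-h})$ (resp.\ with $s^{-j}$ a typo for $2^{-j}$ in \eqref{j}) factor coming from the two nontrivial character choices scaled by the square-argument normalization in \eqref{a}, and moving the $j<h$ terms to the other side, yields the recursive formulas \eqref{f} and \eqref{j}. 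Part $(2)$ requires in addition the inclusion-exclusion relating $C(O(3,q))$ to $C(SO(3,q))$ and $C(Sp(2,q))$, which accounts for the extra $\hat C_j$ term and the $2^{t-2h-j}$ versus $2^{t-h-j}$ discrepancy.

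\emph{Main obstacle.} The genuinely delicate part is Step 2: correctly bookkeeping the Gauss sums over $O(3,q)$ and its subgroups so that the sign conventions, the powers of $q$, and the distinction between square and nonsquare values of $\beta^2-2\beta$ (resp.\ $\beta^2-1$) all come out consistently — in particular verifying that the ``trace nonzero square argument'' restriction on the Kloosterman sum is precisely what the orthogonal-group Gauss sum produces, rather than an ordinary or a trace-zero Kloosterman sum. Once the weight-to-Kloosterman-sum dictionary is nailed down, the remaining manipulations (MacWilliams transform, Pless identity, Stirling expansion) are routine though lengthy.
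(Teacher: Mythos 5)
Your overall architecture matches the paper's (codes indexed by group elements, dual weights computed from the Gauss sums of \cite{D1}, the Pless power moment identity, Stirling expansion), but the proposal has genuine gaps at the two points that actually drive the recursion. First, the ``trace nonzero'' restriction does \emph{not} come out of the orthogonal-group Gauss sum, as you conjecture in your ``main obstacle'' paragraph; the Gauss sum (Corollary \ref{D}) gives the weight of the dual codeword $c_i(a)$ as $\frac{2qi}{3}\bigl(q^2-1-(Re\,\lambda(a))K(\lambda;a^2)\bigr)$ for \emph{every} $a\in\mathbb{F}_q^*$, i.e.\ an affine function of the single value $K(\lambda;a^2)$, not of the moment $T_{12}SK^1$. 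The restriction arises only afterwards, when the left-hand side of the Pless identity, $\sum_{a}w(c_i(a))^h$, is split according to $Re\,\lambda(a)=1$ for $tr\,a=0$ and $Re\,\lambda(a)=-\tfrac12$ for $tr\,a\neq0$; this produces $T_0SK^j$ and $T_{12}SK^j$ with coefficients $(-1)^j$ and $2^{-j}$ respectively, and eliminating $T_0SK^j=2SK^j-T_{12}SK^j$ via (\ref{b}) is exactly what creates the coefficient $(-1)^{j+1}+2^{-j}$. You leave precisely this step unresolved.

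Second, you misattribute the role of $C(Sp(2,q))$: the $\hat C_j$ do not enter through an inclusion--exclusion among the three codes or a shift of weight enumerators, but because after the elimination just described one is left with the full square-moment sum $2(\frac{2q}{3})^h\sum_{j}(-1)^j\binom{h}{j}(q^2-1)^{h-j}SK^j$, which is replaced wholesale by the previously established $Sp(2,q)$ recursion (\ref{k1}) of Theorem \ref{L}; that substitution is the sole source of the $\hat C_j$ terms, in part (1) as well as part (2). (In part (1) they merely combine with $C_{1,j}$ because $N_1=|Sp(2,q)|=|SO(3,q)|$, and the $2^{t-2h-j}$ versus $2^{t-h-j}$ discrepancy is just the factor $i^h=2^h$ from the dual weight formula for $O(3,q)$, not an inclusion--exclusion artifact.) You also conflate the two weight distributions that the Pless identity needs simultaneously: the multinomial counts $C_{i,j}$ are the weight distribution of the code $C(G_i(q))$ itself (codewords $u$ with $u\cdot v_i=0$, counted via the fibre sizes $n_i(\beta)$ of $Tr$ from Proposition \ref{G}), while the Kloosterman expression describes the $q$ codewords of the dual; your Step 2 presents the multinomial sums as counting dual codewords. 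Without the $tr\,a=0$ versus $tr\,a\neq0$ split and the substitution of (\ref{k1}), the recursions (\ref{f}) and (\ref{j}) cannot be assembled.
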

\indent

\section{$O^{}(2n+1,q)$}
\indent

For more details about the results of this section, one is referred
to the paper \cite{D1}. Throughout this paper, the following
notations will be used:
\begin{align*}
\begin{split}
q&=3^r~(r\in\mathbb{Z}_{>0}),\\
\mathbb{F}_q&=~the~finite~field~ with~ q~ elements,\\
TrA&=~the~ trace~ of~ A~ for~ a~ square~ matrix~ A,\\
^t B&=~the~transpose~ of~ B~for~any~matrix~B.
\end{split}
\end{align*}
\indent

The orthogonal group $O(2n+1,q)$ is defined as:
\begin{align*}
O(2n+1,q)=\{w\in GL(2n+1,q)|^twJw=J\},
\end{align*}
where
\begin{align*}
J=\begin{bmatrix}
    0 & 1_{n} & 0 \\
    1_{n} & 0 & 0 \\
    0 & 0 & 1 \\
  \end{bmatrix}.
\end{align*}
It consists of the matrices

\begin{align*}
  \begin{bmatrix}
    A & B & e \\
    C & D & f \\
    g & h & i \\
  \end{bmatrix}
(A,B,C,D~n\times n,e,f~n\times1,g,h~1\times n,i~1\times1)
\end{align*}
in $GL(2n+1,q)$ satisfying the relations: \indent

\begin{align*}
\begin{split}
&{^tA}C+{^tC}A+{^tg}g=0,~{^tB}D+{^tD}B+{^th}h=0,\\
&{^tA}D+{^tC}B+{^tg}h=1_{n},~{^te}f+{^tf}e+i^2=1,\\
&{^tA}f+{^tC}e+{^tg}i=0,~{^tB}f+{^tD}e+{^th}i=0.\\
\end{split}
\end{align*}
\indent

Let $P(2n+1,q)$ be the maximal parabolic subgroup of $O(2n+1,q)$
given by
\begin{align*}
\begin{split}
P&=P(2n+1,q)\\
&=\left\{\begin{bmatrix}
        A & 0 & 0 \\
        0 & {^tA}^{-1} & 0 \\
        0 & 0 & i \\
      \end{bmatrix}
       \begin{bmatrix}
               1_{n} & B & -{^th} \\
               0 & 1_{n} & 0 \\
               0 & h & 1 \\
             \end{bmatrix}
\Bigg|\begin{array}{c}
   A\in GL(n,q),~i=\pm1 \\
   B+{^tB}+{^th}h=0
 \end{array}
\right\},
\end{split}
\end{align*}
and let $Q=Q(2n+1,q)$ be the subgroup of $P(2n+1,q)$ of index 2
defined by
\begin{align*}
\begin{split}
Q&=Q(2n+1,q)\\
&=\left\{
      \begin{bmatrix}
        A & 0 & 0 \\
        0 & {^tA}^{-1} & 0 \\
        0 & 0 & 1 \\
      \end{bmatrix}
             \begin{bmatrix}
               1_{n} & B & -{^th} \\
               0 & 1_{n} & 0 \\
               0 & h & 1 \\
             \end{bmatrix}
\Bigg|\begin{array}{c}
   A\in GL(n,q) \\
   B+{^tB}+{^th}h=0
 \end{array}
\right\}.
\end{split}
\end{align*}
Then we see that
\begin{align*}
P(2n+1,q)=Q(2n+1,q)\amalg\rho Q(2n+1,q),
\end{align*}
with
\begin{align*}
\rho=\begin{bmatrix}
         1_{n} & 0 & 0 \\
         0 & 1_{n} & 0 \\
         0 & 0 & -1 \\
       \end{bmatrix}.
\end{align*}
\indent

Let $\sigma_r$ denote the following matrix in $O(2n+1,q)$
\begin{align*}
\sigma_r=\begin{bmatrix}
             0 & 0 & 1_r & 0 & 0 \\
             0 & 1_{n-r} & 0 & 0 & 0 \\
             1_r & 0 & 0 & 0 & 0 \\
             0 & 0 & 0 & 1_{n-r} & 0 \\
             0 & 0 & 0 & 0 & 1 \\
           \end{bmatrix}
~(0\leq r\leq n).
\end{align*}
Then the Bruhat decomposition of $O(2n+1,q)$ with respect to
$P=P(2n+1,q)$ is given by
\begin{align*}
O(2n+1,q)=\coprod_{r=0}^{n}P\sigma_r P=\coprod_{r=0}^{n}P\sigma_r Q,
\end{align*}
which can further be modified as
\begin{equation}\label{m}
\begin{split}
O(2n+1,q)&=\coprod_{r=0}^{n}P\sigma_r (B_r\setminus Q)\\
&=\coprod_{r=0}^{n}Q\sigma_r (B_r\setminus
Q)\amalg\coprod_{r=0}^{n}\rho Q\sigma_r(B_r\setminus Q),
\end{split}
\end{equation}
with
\begin{align*}
B_r=B_r (q)=\{w\in Q(2n+1,q)|\sigma_r w \sigma_r^{-1}\in
P(2n+1,q)\}.
\end{align*}
\indent

The special orthogonal group $SO(2n+1,q)$ is defined as
\begin{align*}
SO(2n+1,q)=\{w\in O(2n+1,q)|\text{det}w=1\}.
\end{align*}
Then we see from (\ref{m}) that
\begin{equation}\label{n}
SO(2n+1,q)=\coprod_{0\leq r\leq n,~r~even}Q\sigma_r(B_r\setminus
Q)\amalg\coprod_{0\leq r\leq n,~r~odd} \rho Q\sigma_r(B_r\setminus
Q).
\end{equation}
\indent

The sympletic group $Sp(2n,q)$ is defined as:
\begin{align*}
Sp(2n,q)=\{w\in GL(2n,q)|{^tw}\hat{J}{w}=\hat{J}\},
\end{align*}
with
\begin{align*}
\hat{J}=\begin{bmatrix}
            0 & 1_n \\
            1_n & 0 \\
          \end{bmatrix}.
\end{align*}
\indent

As is well-known or mentioned in \cite{D3} and \cite{D1},
\begin{equation}\label{o}
|O(2n+1,q)|=2q^{n^2}\prod_{j=1}^{n}(q^{2j}-1),\qquad\qquad\qquad
\end{equation}
\begin{equation}\label{p}
|SO(2n+1,q)|=|Sp(2n,q)|=q^{n^2}\prod_{j=1}^{n}(q^{2j}-1).
\end{equation}
\indent

For integers $n$, $r$ with $0\leq r\leq n$, the $q$-binomial
coefficients are defined as:
\begin{align*}
\begin{bmatrix}
                                                              n \\
                                                              r \\
                                                            \end{bmatrix}
_q=\prod_{j=0}^{r-1}(q^{n-j}-1)/(q^{r-j}-1).
\end{align*}
It is shown in \cite{D1} that
\begin{equation}\label{r}
|B_r(q)\setminus Q(2n+1)|=q^{\binom{r+1}{2}}
                                                            \begin{bmatrix}
                                                              n \\
                                                              r \\
                                                            \end{bmatrix}
_q.
\end{equation}
\indent

\section{Gauss sums for $O^{}(2n+1,q)$}
\indent

The following notations will be employed throughout this paper.

\begin{align*}
\begin{split}
tr(x)&=x+x^{3}+\cdots+x^{{3}^{r-1}} ~the~ trace~function~ \mathbb{F}_{q}~\rightarrow ~\mathbb{F}_{3},\\
\lambda_0(x)&=e^{2\pi ix/3} ~the~ canonical~ additive~ character~
of~\mathbb{F}_{3}^{},\\
\lambda(x)&=e^{2\pi itr(x)/3} ~the~ canonical~ additive~ character~
of~ \mathbb{F}_{q}^{}.
\end{split}
\end{align*}
Then any nontrivial additive character $\psi$ of $\mathbb{F}_q$ is
given by $\psi(x)=\lambda(ax)$, for a unique
$a\in\mathbb{F}_{q}^{*}$. Also, since $\lambda(a)$ for any
$a\in\mathbb{F}_q$ is a 3th root of 1, we have

\begin{equation}\label{s}
\lambda(-a)=\lambda(2a)=\lambda(a)^2=\lambda(a)^{-1}=\overline{\lambda(a)}.
\end{equation}
\indent

For any nontrivial additive character $\psi$ of $\mathbb{F}_q$ and
$a\in\mathbb{F}_{q}^{*}$, the Kloosterman sum $K_{GL(t,q)}(\psi;a)$
for $GL(t,q)$ is defined as

\begin{align*}
K_{GL(t,q)}(\psi;a)=\sum_{w\in GL(t,q)}\psi(Trw+aTrw^{-1}).
\end{align*}
Observe that, for $t=1$, $K_{GL(1,q)}(\psi;a)$ denotes the
Kloosterman sum $K(\psi;a)$.

\noindent In \cite{D3}, it is shown that $K_{GL(t,q)}(\psi;a)$
satisfies the following recursive relation: for integers $t\geq2$,
$a\in\mathbb{F}_q^*$,
\begin{align*}
K_{GL(t,q)}(\psi;a)=q^{t-1}K_{GL(t-1,q)}(\psi;a)K(\psi;a)+q^{2t-2}(q^{t-1}-1)
K_{GL(t-2,q)}(\psi;a),
\end{align*}
where we understand that $K_{GL(0,q)}(\psi;a)=1$.\\
\indent

\begin{proposition}\label{B}$($\cite{D1}$)$
Let $\psi$ be a nontrivial additive character of $\mathbb{F}_q$. For
each positive integer $r$, let $\Omega_r$ be the set of all $r\times
r$ nonsingular symmetric matrices over $\mathbb{F}_q$. Then we have
\begin{equation}\label{t}
a_r(\psi)=\sum_{B\in\Omega_r}\sum_{h\in\mathbb{F}_q^{r\times
1}}\psi({^th}Bh)=
\begin{cases}
q^{r(r+2)/4}\prod_{j=1}^{r/2}(q^{2j-1}-1), & \text{for $r$ even},\\
0,& \text{for $r$ odd.}
\end{cases}
\end{equation}
\end{proposition}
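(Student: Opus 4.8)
The plan is to evaluate the inner sum $\sum_{h\in\mathbb{F}_q^{r\times1}}\psi({}^{t}hBh)$ for a fixed $B\in\Omega_r$, and then sum over $B\in\Omega_r$. Since $\mathrm{char}\,\mathbb{F}_q=3$, every nonsingular symmetric $B$ is congruent over $\mathbb{F}_q$ to a diagonal matrix $D=\mathrm{diag}(d_1,\dots,d_r)$ with all $d_i\in\mathbb{F}_q^{*}$, and the reindexing $h\mapsto gh$ shows that $v(B):=\sum_h\psi({}^{t}hBh)$ is a congruence invariant of $B$. Thus it suffices to compute $v(D)=\prod_{i=1}^r\bigl(\sum_{c\in\mathbb{F}_q}\psi(d_ic^2)\bigr)=\prod_{i=1}^r\eta(d_i)\,G(\eta,\psi)=\eta(\det B)\,G(\eta,\psi)^r$, where $\eta$ is the quadratic character of $\mathbb{F}_q^{*}$ and $G(\eta,\psi)=\sum_{c\in\mathbb{F}_q^{*}}\eta(c)\psi(c)$. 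Using only the identity $G(\eta,\psi)^2=\eta(-1)q$ (the explicit value of $G(\eta,\psi)$ is never needed), this gives $v(B)=\eta\bigl((-1)^{r/2}\det B\bigr)q^{r/2}$ for $r$ even and $v(B)=\eta\bigl((-1)^{(r-1)/2}\det B\bigr)q^{(r-1)/2}G(\eta,\psi)$ for $r$ odd; in either case $v(B)$ depends only on the parity of $r$ and on the square class $\eta(\det B)$.

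Next I would invoke the classification of nondegenerate quadratic forms over a finite field of odd characteristic: there are exactly two congruence classes of matrices in $\Omega_r$, distinguished by $\eta(\det B)=\pm1$, with sizes $N^{+}$ and $N^{-}$. By orbit--stabilizer applied to the congruence action $B\mapsto{}^{t}gBg$ of $GL(r,q)$ on $\Omega_r$, one has $N^{\pm}=|GL(r,q)|/|O^{\pm}|$, where $O^{+}$ (resp. $O^{-}$) is the orthogonal group of a representative form. Consequently $a_r(\psi)=v^{+}N^{+}+v^{-}N^{-}$, where $v^{\pm}$ is the common value of $v(B)$ on the corresponding class.

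For $r$ odd, scaling a form by a nonsquare $\nu$ interchanges the two classes and replaces $\psi$ by the character $x\mapsto\psi(\nu x)$, which sends $G(\eta,\psi)$ to $\eta(\nu)G(\eta,\psi)=-G(\eta,\psi)$; hence $v^{-}=-v^{+}$. Moreover, by \eqref{o} both odd-dimensional orthogonal groups have the same order $2q^{m^{2}}\prod_{j=1}^{m}(q^{2j}-1)$ (with $r=2m+1$), so $N^{+}=N^{-}$ and therefore $a_r(\psi)=v^{+}(N^{+}-N^{-})=0$. For $r=2m$ even, comparing the discriminant of the hyperbolic (plus-type) form against the $(-1)^{m}$ twist in the formula for $v(B)$ yields $v^{+}=q^{m}$ and $v^{-}=-q^{m}$, so $a_r(\psi)=q^{m}(N^{+}-N^{-})$. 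Inserting $|O^{\pm}(2m,q)|=2q^{m(m-1)}(q^{m}\mp1)\prod_{j=1}^{m-1}(q^{2j}-1)$ and $|GL(2m,q)|=q^{m(2m-1)}\prod_{j=1}^{2m}(q^{j}-1)$, and splitting $\prod_{j=1}^{2m}(q^{j}-1)=\prod_{j=1}^{m}(q^{2j}-1)\prod_{j=1}^{m}(q^{2j-1}-1)$, the difference $N^{+}-N^{-}$ collapses to $q^{m^{2}}\prod_{j=1}^{m}(q^{2j-1}-1)$, so $a_r(\psi)=q^{m^{2}+m}\prod_{j=1}^{m}(q^{2j-1}-1)=q^{r(r+2)/4}\prod_{j=1}^{r/2}(q^{2j-1}-1)$, as claimed.

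The step I expect to require the most care is purely bookkeeping of sign and discriminant conventions: the factor $\tfrac12$ relating the quadratic form $Q(h)={}^{t}hBh$ to its Gram matrix, the exponent $(-1)^{\lfloor r/2\rfloor}$ in the Weil-sum evaluation, and the correct identification of which of the two congruence classes is ``plus type'', so that the signs of $v^{\pm}$ come out right. Everything else --- the one-variable quadratic Gauss sum identity, the two-class classification of forms, the orbit--stabilizer count, and the final algebraic simplification --- is standard, the only structural hypothesis used being that $\mathbb{F}_q$ has odd characteristic.
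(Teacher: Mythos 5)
Your argument is correct and complete. Note that the paper itself offers no proof of Proposition \ref{B}: it is imported verbatim from \cite{D1}, so there is no in-text argument to compare against; your write-up is therefore judged on its own terms. The three pillars all hold: (i) the congruence invariance and diagonalization give $v(B)=\eta(\det B)\,G(\eta,\psi)^{r}$, and with $G(\eta,\psi)^{2}=\eta(-1)q$ this depends only on the parity of $r$ and on $\eta(\det B)$, independently of which nontrivial $\psi$ is used (consistent with the $\psi$-free right-hand side of \eqref{t}); (ii) the two-orbit classification plus orbit--stabilizer reduces $a_r(\psi)$ to $v^{+}N^{+}+v^{-}N^{-}$; (iii) for odd $r$ the scaling-by-a-nonsquare bijection gives $N^{+}=N^{-}$ and $v^{-}=-v^{+}$, hence $0$, while for $r=2m$ the split representative $\bigoplus_{i=1}^{m}\left(\begin{smallmatrix}0&1\\1&0\end{smallmatrix}\right)$ has determinant $(-1)^{m}$, so $v^{+}=\eta((-1)^{2m})q^{m}=q^{m}$, and since $|O^{+}|<|O^{-}|$ one gets $N^{+}-N^{-}>0$; I checked that $\frac{1}{q^{m}-1}-\frac{1}{q^{m}+1}=\frac{2}{q^{2m}-1}$ makes the difference collapse to $q^{m^{2}}\prod_{j=1}^{m}(q^{2j-1}-1)$, giving the exponent $m^{2}+m=r(r+2)/4$ exactly as in \eqref{t}. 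The only cosmetic remark is that your detour through ``replacing $\psi$ by $x\mapsto\psi(\nu x)$'' to get $v^{-}=-v^{+}$ in the odd case is unnecessary: it already follows from $v(B)=\eta(\det B)G(\eta,\psi)^{r}$ that the two classes carry opposite values. (For the record, \cite{D1} obtains $a_r(\psi)$ by the same standard route --- counting symmetric matrices in each determinant class and evaluating the diagonal Weil sums --- so your proof is not materially different from the source the paper relies on.)
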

\indent

From \cite{D3} and \cite{D1}, the Gauss sums for $SO(2n+1,q)$ and
$O(2n+1,q)$ are respectively equal to $\psi(1)$ times that for
$Sp(2n,q)$ and $\psi(1)+\psi(-1)$ times that for $Sp(2n,q)$. Indeed,
using the decomposition in (\ref{n}), for any nontrivial additive
character $\psi$ of $\mathbb{F}_q$, it is shown that

\begin{align*}
\begin{split}
&\sum_{w\in SO(2n+1,q)}\psi(Trw)\\
&=\sum_{\substack{0\leq r \leq n\\r~even}}|B_r\setminus Q|\sum_{w\in
Q}\psi(Trw\sigma_r)
+\sum_{\substack{0\leq r \leq n\\r~odd}}|B_r\setminus Q|\sum_{w\in Q}\psi(Tr\rho w\sigma_r)\\
&=q^{\binom{n+1}{2}}\{\psi(1)\sum_{\substack{0\leq r \leq n\\r~even}}|B_r\setminus Q|
q^{r(n-r-1)}a_r(\psi)K_{GL(n-r,q)}(\psi;1)\\
&\qquad \qquad \qquad \qquad +\psi(-1)\sum_{\substack{0\leq r \leq
n\\r~odd}}|B_r\setminus Q|
q^{r(n-r-1)}a_r(\psi)K_{GL(n-r,q)}(\psi;1)\}\\
&=\psi(1)q^{\binom{n+1}{2}}\sum_{\substack{0\leq r \leq
n\\r~even}}q^{rn-{\frac{1}{4}}r^2}\begin{bmatrix}
                                                              n \\
                                                              r \\
                                                            \end{bmatrix}
_q\prod_{j=1}^{r/2}(q^{2j-1}-1)K_{GL(n-r,q)}(\psi;1)
~(cf.(\ref{r}),~(\ref{t}))\\
&(=\psi(1)\sum_{w\in Sp(2n,q)}\psi(Trw))~(cf. ~[4]).
\end{split}
\end{align*}
\indent

Similarly, from the decomposition in (\ref{m}) it is shown in
\cite{D1} that

\begin{align*}
\begin{split}
&\sum_{w\in O(2n+1,q)}\psi(Trw)\\
&=(\psi(1)+\psi(-1))q^{\binom{n+1}{2}}\sum_{\substack{0\leq r \leq
n\\r~even}}q^{rn-{\frac{1}{4}}r^2}\begin{bmatrix}
                                                              n \\
                                                              r \\
                                                            \end{bmatrix}
_q\prod_{j=1}^{r/2}(q^{2j-1}-1)
K_{GL(n-r,q)}(\psi;1)\\
&(=(\psi(1)+\psi(-1))\sum_{w\in Sp(2n,q)}\psi(Trw)).
\end{split}
\end{align*}
\indent

For our purposes, we only need the following expressions of Gauss
sums for $SO(3,q)$ and $O(3,q)$. So we state them separately as a
theorem. Also, for the ease of notations, we introduce

\begin{align*}
G_1(q)=SO(3,q),~G_2(q)=O(3,q).
\end{align*}
\indent

\begin{theorem}\label{C}
Let $\psi$ be any nontrivial additive character of $\mathbb{F}_q$.
Then we have
\begin{align*}
\begin{split}
&\sum_{w\in G_1(q)}\psi(Trw)=\psi(1)qK(\psi;1),\\
&\sum_{w\in G_2(q)}\psi(Trw)=(\psi(1)+\psi(-1))qK(\psi;1).
\end{split}
\end{align*}
\end{theorem}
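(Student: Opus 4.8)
The plan is to specialize the general Gauss-sum computation for $O(2n+1,q)$ displayed just above the statement to the case $n=1$. Recall that for general $n$ the excerpt establishes
\[
\sum_{w\in SO(2n+1,q)}\psi(Trw)=\psi(1)q^{\binom{n+1}{2}}\sum_{\substack{0\le r\le n\\ r\ \mathrm{even}}}q^{rn-\frac14 r^2}\begin{bmatrix} n\\ r\end{bmatrix}_q\prod_{j=1}^{r/2}(q^{2j-1}-1)\,K_{GL(n-r,q)}(\psi;1),
\]
and the analogous identity for $O(2n+1,q)$ with $\psi(1)$ replaced by $\psi(1)+\psi(-1)$. So the first step is simply to put $n=1$ and observe that the inner sum over even $r$ in $\{0,1\}$ collapses to the single term $r=0$.

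For that term we have $r=0$, so $q^{rn-\frac14 r^2}=q^0=1$, the $q$-binomial coefficient $\begin{bmatrix}1\\0\end{bmatrix}_q=1$, the empty product $\prod_{j=1}^{0}(q^{2j-1}-1)=1$, and $K_{GL(n-r,q)}(\psi;1)=K_{GL(1,q)}(\psi;1)=K(\psi;1)$, the ordinary Kloosterman sum. The prefactor is $q^{\binom{n+1}{2}}=q^{\binom{2}{2}}=q^1=q$. Assembling these gives $\sum_{w\in SO(3,q)}\psi(Trw)=\psi(1)\,q\,K(\psi;1)$, and likewise $\sum_{w\in O(3,q)}\psi(Trw)=(\psi(1)+\psi(-1))\,q\,K(\psi;1)$, which are exactly the two asserted formulas since $G_1(q)=SO(3,q)$ and $G_2(q)=O(3,q)$.

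There is essentially no obstacle here: the theorem is a direct corollary of the already-derived general formulas, and all that is needed is to check that each factor in the $r=0$ summand equals $1$ (or, for the prefactor, $q$) and that $K_{GL(1,q)}(\psi;a)=K(\psi;a)$, which was noted explicitly in the paragraph defining $K_{GL(t,q)}$. If one wished to be fully self-contained rather than quoting the $n$-variable identity, the alternative would be to redo the Bruhat-decomposition argument directly for $O(3,q)$: use the decomposition $SO(3,q)=Q\sigma_0(B_0\backslash Q)\amalg \rho Q\sigma_1(B_1\backslash Q)$ from (\ref{n}), compute $Trw$ and $Tr\rho w\sigma_1$ over these cosets, apply Proposition \ref{B} (only $a_0(\psi)=1$ and $a_1(\psi)=0$ enter, the latter killing the $r=1$ contribution), and use (\ref{r}) with the values $|B_0\backslash Q|=1$; but invoking the general formula is cleaner and is what I would write.
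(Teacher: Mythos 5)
Your proposal is correct and is essentially the paper's own route: the paper states Theorem \ref{C} as the $n=1$ specialization of the general Gauss sum formulas for $SO(2n+1,q)$ and $O(2n+1,q)$ displayed immediately beforehand (quoted from \cite{D1}), and your verification that the $r=0$ term contributes $q\cdot 1\cdot 1\cdot K_{GL(1,q)}(\psi;1)=qK(\psi;1)$ while the $r=1$ term is absent is exactly the needed check. No gaps.
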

\indent

The next corollary follows from Theorem \ref{B} and by simple change
of variables.

\begin{corollary}\label{D}
Let $\lambda$ be the canonical additive character of $\mathbb{F}_q$,
and let $a\in \mathbb{F}_{q}^{*}$. Then we have
\begin{equation}\label{u}
\sum_{w\in G_1(q)}\lambda(aTrw)=\lambda(a)qK(\lambda;a^2),\qquad
\qquad\quad\quad\quad
\end{equation}
\begin{equation}\label{v}
\begin{split}
\sum_{w\in G_2(q)}\lambda(aTrw)&=(\lambda(a)+\lambda(-a))qK(\lambda;a^2)\\
&=2(Re\lambda(a))qK(\lambda;a^2)~(cf.~(\ref{s})).
\end{split}
\end{equation}
\end{corollary}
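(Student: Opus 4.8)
The goal is to prove Corollary \ref{D}, which specializes Theorem \ref{C} to the canonical additive character $\lambda$ and replaces the character $\psi$ by the character $x\mapsto\lambda(ax)$ for a fixed $a\in\mathbb{F}_q^*$. The plan is to apply Theorem \ref{C} with the nontrivial additive character $\psi=\psi_a$ defined by $\psi_a(x)=\lambda(ax)$, and then identify the Kloosterman sum $K(\psi_a;1)$ appearing on the right-hand side with the Kloosterman sum $K(\lambda;a^2)$ via a change of variables in the defining sum.

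First I would record that $\sum_{w\in G_i(q)}\psi_a(Trw)=\sum_{w\in G_i(q)}\lambda(a\,Trw)$, which is exactly the left-hand sides of \eqref{u} and \eqref{v}. Theorem \ref{C} then gives, for $i=1$, the value $\psi_a(1)qK(\psi_a;1)=\lambda(a)qK(\psi_a;1)$, and for $i=2$ the value $(\psi_a(1)+\psi_a(-1))qK(\psi_a;1)=(\lambda(a)+\lambda(-a))qK(\psi_a;1)$. So the only thing left is the identity $K(\psi_a;1)=K(\lambda;a^2)$. By definition,
\begin{align*}
K(\psi_a;1)=\sum_{\alpha\in\mathbb{F}_q^*}\psi_a(\alpha+\alpha^{-1})=\sum_{\alpha\in\mathbb{F}_q^*}\lambda(a\alpha+a\alpha^{-1}).
\end{align*}
Substituting $\alpha=a^{-1}\beta$ (a bijection of $\mathbb{F}_q^*$ onto itself) turns the argument into $\lambda(\beta+a^2\beta^{-1})$, so the sum becomes $\sum_{\beta\in\mathbb{F}_q^*}\lambda(\beta+a^2\beta^{-1})=K(\lambda;a^2)$. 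Plugging this back yields \eqref{u}, and for \eqref{v} the first equality; the second equality in \eqref{v}, namely $(\lambda(a)+\lambda(-a))=2\,\mathrm{Re}\,\lambda(a)$, is immediate from \eqref{s}, since $\lambda(-a)=\overline{\lambda(a)}$ and hence $\lambda(a)+\overline{\lambda(a)}=2\,\mathrm{Re}\,\lambda(a)$.

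There is essentially no obstacle here: the corollary is a routine reparametrization of Theorem \ref{C}, and the only mild point worth being careful about is that $x\mapsto\lambda(ax)$ is genuinely a nontrivial additive character of $\mathbb{F}_q$ for $a\neq0$ (so that Theorem \ref{C} applies), which is standard, together with the bijectivity of $\alpha\mapsto a^{-1}\alpha$ on $\mathbb{F}_q^*$ used in the change of variables. I would write the argument in two short displays, one for each of $G_1(q)$ and $G_2(q)$, carrying out the substitution explicitly in the first and then simply quoting it in the second.
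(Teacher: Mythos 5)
Your proposal is correct and follows exactly the route the paper intends: the paper gives no written proof beyond the remark that the corollary "follows from Theorem \ref{C} and by simple change of variables," and your argument — applying Theorem \ref{C} to the character $\psi_a(x)=\lambda(ax)$ and then using the substitution $\alpha\mapsto a^{-1}\beta$ to identify $K(\psi_a;1)$ with $K(\lambda;a^2)$, plus \eqref{s} for the real-part identity — is precisely that change of variables carried out in full. No issues.
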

\indent

\begin{proposition}\label{E}$($\cite{D4}, $($5.3-5$)$$)$
Let $\lambda$ be the canonical additive character of $F_q$, $m\in
\mathbb{Z}_{\geq0}$, $\beta\in\mathbb{F}_q$. Then
\begin{equation}\label{w}
\sum_{a\in\mathbb{F}_{q}^{*}}\lambda(-a\beta)K(\lambda;a^2)^m=q\delta(m,q;\beta)-(q-1)^m,
\end{equation}
where, for $m\geq1$,
\begin{equation}\label{x}
\delta(m,q;\beta)=|\{(\alpha_{1}^{},\cdots,\alpha_{m}^{})\in(\mathbb{F}_{q}^{*})^m|\alpha_{1}^{}+\alpha_{1}^{-1}+
\cdots,\alpha_{m}^{}+\alpha_{m}^{-1}=\beta\}|,
\end{equation}
and
\begin{align*}
\delta(0,q;\beta)=
\begin{cases}
1,& \text {$\beta$=0,}\\
0,& \text {otherwise.}
\end{cases}
\end{align*}
\end{proposition}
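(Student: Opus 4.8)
\textbf{Proof proposal for Proposition \ref{E}.}

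The plan is to prove the identity by interchanging the order of summation and invoking the definition of the Kloosterman sum together with orthogonality of additive characters. First I would treat the case $m\geq 1$, writing
\begin{align*}
\sum_{a\in\mathbb{F}_q^*}\lambda(-a\beta)K(\lambda;a^2)^m
&=\sum_{a\in\mathbb{F}_q^*}\lambda(-a\beta)\Bigl(\sum_{\alpha\in\mathbb{F}_q^*}\lambda(\alpha+a^2\alpha^{-1})\Bigr)^m\\
&=\sum_{(\alpha_1,\dots,\alpha_m)\in(\mathbb{F}_q^*)^m}\lambda\Bigl(\sum_{i=1}^m\alpha_i\Bigr)\sum_{a\in\mathbb{F}_q^*}\lambda\Bigl(a^2\sum_{i=1}^m\alpha_i^{-1}-a\beta\Bigr).
\end{align*}
The inner sum over $a$ is a (twisted) Gauss-type sum in $a$. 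The key observation I would exploit is a change of variables: since $q=3^r$ is odd, squaring is not a bijection, but in the inner sum I want to eliminate the $a^2$ term in favor of something linear. Actually the cleaner route, and the one I would pursue, is to substitute $a\mapsto a$ and instead re-expand $K(\lambda;a^2)$ directly as $K(\lambda;a^2)=\sum_{\alpha}\lambda(\alpha+a^2\alpha^{-1})$ and then for each fixed tuple reduce $\sum_a\lambda(a^2 c - a\beta)$ where $c=\sum_i\alpha_i^{-1}$; completing the square (valid since $2$ is invertible mod $3$) turns this into $\lambda(-\beta^2(4c)^{-1})$ times a quadratic Gauss sum when $c\neq 0$, and into $\sum_a\lambda(-a\beta)$ when $c=0$.

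However, I expect the intended and shorter argument is different: one should recognize that $K(\lambda;a^2)$ as $a$ ranges over $\mathbb{F}_q^*$ is closely tied to $K(\lambda;a)$ as $a$ ranges over squares, and use the substitution $\alpha_i\mapsto a\alpha_i$ inside each Kloosterman factor. Indeed, replacing $\alpha_i$ by $a\alpha_i$ gives $K(\lambda;a^2)=\sum_{\alpha_i\in\mathbb{F}_q^*}\lambda(a\alpha_i+a\alpha_i^{-1})=\sum_{\alpha_i}\lambda(a(\alpha_i+\alpha_i^{-1}))$. Carrying this out in all $m$ factors yields
\begin{align*}
\sum_{a\in\mathbb{F}_q^*}\lambda(-a\beta)K(\lambda;a^2)^m
=\sum_{(\alpha_1,\dots,\alpha_m)\in(\mathbb{F}_q^*)^m}\sum_{a\in\mathbb{F}_q^*}\lambda\Bigl(a\bigl(\textstyle\sum_{i=1}^m(\alpha_i+\alpha_i^{-1})-\beta\bigr)\Bigr).
\end{align*}
Now the inner sum over $a\in\mathbb{F}_q^*$ is $q-1$ if $\sum_i(\alpha_i+\alpha_i^{-1})=\beta$ and $-1$ otherwise, by the standard orthogonality relation $\sum_{a\in\mathbb{F}_q}\lambda(ax)=q\,[x=0]$ restricted to $a\neq 0$. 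Summing over all tuples, the number of tuples with $\sum_i(\alpha_i+\alpha_i^{-1})=\beta$ is exactly $\delta(m,q;\beta)$ by definition \eqref{x}, and there are $(q-1)^m$ tuples in total, so the expression equals $(q-1)\delta(m,q;\beta)-\bigl((q-1)^m-\delta(m,q;\beta)\bigr)=q\,\delta(m,q;\beta)-(q-1)^m$, as claimed.

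Finally I would dispose of the case $m=0$ separately: there $K(\lambda;a^2)^0=1$, so the left side is $\sum_{a\in\mathbb{F}_q^*}\lambda(-a\beta)$, which is $q-1$ when $\beta=0$ and $-1$ when $\beta\neq 0$; this matches $q\,\delta(0,q;\beta)-(q-1)^0$ with the stated convention for $\delta(0,q;\beta)$. The only subtle point—hence the main thing to get right—is the legitimacy of the substitution $\alpha_i\mapsto a\alpha_i$: this is a bijection of $\mathbb{F}_q^*$ for each fixed $a\in\mathbb{F}_q^*$, so it is harmless, but it is crucial that it simultaneously clears the $a^2$ and produces the clean linear form $a(\alpha_i+\alpha_i^{-1})$, which is what collapses everything to the counting function $\delta(m,q;\beta)$. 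No deep input is needed beyond character orthogonality; the result is essentially bookkeeping once the right change of variables is made. Since the statement is quoted from \cite{D4}, I would simply cite it, but the above is the self-contained argument.
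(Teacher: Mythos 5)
Your argument is correct and complete: the substitution $\alpha_i\mapsto a\alpha_i$ in each Kloosterman factor, followed by orthogonality of $\lambda$ over $\mathbb{F}_q^*$ and the separate check for $m=0$, is exactly the standard proof of this identity. The paper itself offers no proof—it simply cites \cite{D4}, (5.3--5)—so there is nothing in-paper to diverge from; your self-contained derivation is the expected one. The only stylistic remark is that your first paragraph (completing the square to reduce to a quadratic Gauss sum) is an unnecessary detour that you rightly abandon; the final write-up should start directly from the change of variables $\alpha_i\mapsto a\alpha_i$.
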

\indent

\begin{remark}\label{F}
Here one notes that
\begin{equation}\label{y}
\begin{split}
\delta(1,q;\beta)&=|\{x\in\mathbb{F}_{q}^{}|x^2-\beta x+1=0\}|\\
&=
\begin{cases}
2,& \text {if $\beta^2-1\neq0$ is a square,}\\
1,& \text {if $\beta^2-1$=0,}\\
0,& \text {if $\beta^2-1$ is a nonsquare.}
\end{cases}
\end{split}
\end{equation}
\end{remark}
\indent

Let $G(q)$ be  one of finite classical groups over $\mathbb{F}_q$.
Then we put, for each $\beta\in\mathbb{F}_q$,

\begin{align*}
N_{G(q)}(\beta)=|\{w\in G(q)|Tr(w)=\beta\}|.
\end{align*}
Then it is easy to see that

\begin{equation}\label{z}
qN_{G(q)}(\beta)=|G(q)|+\sum_{a\in\mathbb{F}_{q}^{*}}\lambda(-a\beta)
\sum_{w\in G(q)}\lambda(aTrw).
\end{equation}
For brevity, we write

\begin{equation}\label{a1}
n_1(\beta)=N_{G_1(q)}(\beta),~n_2(\beta)=N_{G_2(q)}(\beta).
\end{equation}

Using (\ref{u})-(\ref{w}), and (\ref{d1}), one derives the
following.
\indent

\begin{proposition}\label{G}
With the notations in (\ref{x}), (\ref{y}), and (\ref{a1}), we have:
\begin{equation}\label{b1}
n_1(\beta)=q^2-q+q\delta(1,q;\beta-1),\qquad \qquad \qquad \qquad
\end{equation}
\begin{equation}\label{c1}
n_2(\beta)=2q^2-2q+q\delta(1,q;\beta-1)+q\delta(1,q;\beta+1).
\end{equation}
\end{proposition}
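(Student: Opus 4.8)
The plan is to start from the identity (\ref{z}), which expresses $q N_{G(q)}(\beta)$ in terms of $|G(q)|$ and the character-sum $\sum_{w\in G(q)}\lambda(aTrw)$. For $G=G_1(q)=SO(3,q)$ we have $|G_1(q)|=N_1=q(q^2-1)$ by (\ref{p}) with $n=1$, and the character sum is evaluated by Corollary~\ref{D}, formula (\ref{u}): $\sum_{w\in G_1(q)}\lambda(aTrw)=\lambda(a)qK(\lambda;a^2)$. Substituting this into (\ref{z}) gives
\begin{align*}
qn_1(\beta)=q(q^2-1)+\sum_{a\in\mathbb{F}_q^*}\lambda(-a\beta)\lambda(a)qK(\lambda;a^2)
=q(q^2-1)+q\sum_{a\in\mathbb{F}_q^*}\lambda(-a(\beta-1))K(\lambda;a^2).
\end{align*}
Now I would apply Proposition~\ref{E} (formula (\ref{w})) with $m=1$ and with $\beta$ there replaced by $\beta-1$: the inner sum equals $q\delta(1,q;\beta-1)-(q-1)$. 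Hence $qn_1(\beta)=q(q^2-1)+q(q\delta(1,q;\beta-1)-(q-1))=q(q^2-q+q\delta(1,q;\beta-1))$, and dividing by $q$ yields (\ref{b1}).

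For $G=G_2(q)=O(3,q)$ the argument is the same but uses (\ref{v}) instead of (\ref{u}). We have $|G_2(q)|=2q(q^2-1)$ by (\ref{o}) with $n=1$, and $\sum_{w\in G_2(q)}\lambda(aTrw)=(\lambda(a)+\lambda(-a))qK(\lambda;a^2)$. Plugging into (\ref{z}),
\begin{align*}
qn_2(\beta)=2q(q^2-1)+q\sum_{a\in\mathbb{F}_q^*}\lambda(-a(\beta-1))K(\lambda;a^2)
+q\sum_{a\in\mathbb{F}_q^*}\lambda(-a(\beta+1))K(\lambda;a^2),
\end{align*}
where the two sums come from grouping $\lambda(-a\beta)\lambda(a)=\lambda(-a(\beta-1))$ and $\lambda(-a\beta)\lambda(-a)=\lambda(-a(\beta+1))$. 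Applying Proposition~\ref{E} with $m=1$ to each of the two sums (with parameters $\beta-1$ and $\beta+1$ respectively) gives $q\delta(1,q;\beta-1)-(q-1)$ and $q\delta(1,q;\beta+1)-(q-1)$. Collecting terms, $qn_2(\beta)=2q(q^2-1)+q(q\delta(1,q;\beta-1)-(q-1))+q(q\delta(1,q;\beta+1)-(q-1))=q(2q^2-2q+q\delta(1,q;\beta-1)+q\delta(1,q;\beta+1))$; dividing by $q$ gives (\ref{c1}).

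There is essentially no obstacle here: the proof is a direct substitution, and the only points requiring a little care are (i) the change of variable collapsing $\lambda(-a\beta)\lambda(\pm a)$ into a single character evaluated at $a(\beta\mp 1)$, which is legitimate since $\lambda$ is additive and the sum runs over all of $\mathbb{F}_q^*$, and (ii) matching the parameter in Proposition~\ref{E} correctly (it is $\beta-1$, not $\beta$, for $SO(3,q)$, and both $\beta-1$ and $\beta+1$ for $O(3,q)$). One should also note in passing that the reference to ``(\ref{d1})'' in the sentence preceding Proposition~\ref{G} appears to be a typo for (\ref{z}); the computation only uses (\ref{u}), (\ref{v}), (\ref{w}), and (\ref{z}). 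The explicit values of $\delta(1,q;\cdot)$ recorded in Remark~\ref{F} are not needed for the proof of Proposition~\ref{G} itself, but they will be used later when these formulas feed into the weight distributions $C_{1,j}$, $C_{2,j}$ of Theorem~\ref{A}.
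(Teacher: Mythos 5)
Your proof is correct and follows exactly the route the paper indicates (it gives no detailed argument, saying only ``Using (\ref{u})--(\ref{w}), and (\ref{d1}), one derives the following''): substitute the Gauss sums (\ref{u}), (\ref{v}) into the orthogonality identity (\ref{z}) and evaluate the resulting sums with Proposition \ref{E} at $m=1$ and parameters $\beta\mp 1$. The only quibble is that the paper's citation of (\ref{d1}) is not a typo --- it supplies the group orders $|G_1(q)|$, $|G_2(q)|$ needed in (\ref{z}), which you obtained equivalently from (\ref{o}) and (\ref{p}).
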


\begin{corollary}\label{H}
$Tr:G_1(q)\rightarrow \mathbb{F}_q$, and $Tr:G_2(q)\rightarrow
\mathbb{F}_q$ are surjective.
\end{corollary}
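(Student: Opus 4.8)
\emph{Proof proposal.} The plan is to read off surjectivity directly from the explicit fibre counts already established in Proposition \ref{G}. For a classical group $G(q)$ and $\beta\in\mathbb{F}_q$, the number $N_{G(q)}(\beta)=|\{w\in G(q)\mid Tr(w)=\beta\}|$ is precisely the cardinality of the fibre of $Tr$ over $\beta$; hence $Tr\colon G(q)\rightarrow\mathbb{F}_q$ is surjective if and only if $N_{G(q)}(\beta)>0$ for every $\beta\in\mathbb{F}_q$.

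Apply this with $G(q)=G_1(q)=SO(3,q)$ and with $G(q)=G_2(q)=O(3,q)$. By (\ref{b1}) and (\ref{c1}),
\begin{align*}
n_1(\beta)=q^2-q+q\,\delta(1,q;\beta-1),\qquad n_2(\beta)=2q^2-2q+q\,\delta(1,q;\beta-1)+q\,\delta(1,q;\beta+1).
\end{align*}
By (\ref{y}) (equivalently (\ref{l})) the quantity $\delta(1,q;\gamma)\in\{0,1,2\}$ is nonnegative for every $\gamma\in\mathbb{F}_q$, so we obtain the lower bounds $n_1(\beta)\geq q^2-q=q(q-1)$ and $n_2(\beta)\geq 2q^2-2q=2q(q-1)$. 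Since $q=3^r\geq 3$, both right-hand sides are strictly positive, whence $n_1(\beta)>0$ and $n_2(\beta)>0$ for all $\beta\in\mathbb{F}_q$. Recalling $n_i(\beta)=N_{G_i(q)}(\beta)$ from (\ref{a1}), this yields the asserted surjectivity of $Tr$ on both $G_1(q)$ and $G_2(q)$.

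There is no genuine obstacle here: once the fibre sizes are computed in Proposition \ref{G}, the statement reduces to the observation that the ``main term'' $q^2-q$ (respectively $2q^2-2q$) already dominates, so no fibre can be empty. The only point to keep in mind is that the argument uses $q>1$, which holds because $q$ is a positive power of $3$; one could also phrase the proof without invoking Proposition \ref{G} by exhibiting, for each $\beta$, an explicit element of trace $\beta$, but the counting argument above is the most direct route given the results already proved.
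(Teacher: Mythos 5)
Your proposal is correct and is exactly the paper's argument: the paper proves the corollary by declaring it immediate from Proposition \ref{G}, and your write-up simply spells out the underlying observation that $\delta(1,q;\cdot)\geq 0$ forces $n_1(\beta)\geq q^2-q>0$ and $n_2(\beta)\geq 2q^2-2q>0$. No difference in approach, just more detail.
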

\begin{proof}
This is immediate from the above Proposition \ref{G}.
\end{proof}
\indent

\section{Construction of codes}
\indent

Let
\begin{equation}\label{d1}
N_1=|G_1(q)|=q(q^2-1),~N_2=|G_2(q)|=2q(q^2-1).
\end{equation}
Here we will construct  ternary linear codes $C(G_1(q))$ of length
$N_1$ and $C(G_2(q))$ of length $N_2$, respectively associated with
the orthogonal groups $G_1(q)$ and $G_2(q)$. By abuse of notations,
let $g_1,g_2,\cdots,g_{N_i}$ be a fixed ordering of the elements in
the group $G_i(q)$, for $i=1,2$.\\
Also, we put
\begin{align*}
v_i=(Trg_1,Trg_2,\cdots,Trg_{N_i})\in\mathbb{F}_q^{N_i},~for~i=1,2.
\end{align*}
Then the ternary linear code  is defined as
\begin{equation}\label{e1}
C(G_i(q))=\{u\in\mathbb{F}_3^{N_i}|u\cdot v_i=0\},~for~i=1,2,
\end{equation}
where the dot denotes the usual inner product in
$\mathbb{F}_q^{N_i}$.
\indent

The following theorem of Delsarte is well-known.
\indent

\begin{theorem}\label{I} $($\cite{MS1}$)$
Let B be a linear code over $\mathbb{F}_q$. Then
\begin{align*}
(B|_{\mathbb{F}_3})^\bot=tr(B^\bot).
\end{align*}
In view of this theorem, the dual $C(G_i(q))^\bot$ is given by
\begin{equation}\label{f1}
C(G_i(q))^\bot=\{c_i(a)=(tr(aTrg_1),\cdots,tr(aTrg_{N_i}))|a\in\mathbb{F}_q\},~for~i=1,2.
\end{equation}
\end{theorem}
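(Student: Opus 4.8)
The plan is to prove Theorem~\ref{I} in two stages: first establish Delsarte's identity $(B|_{\mathbb{F}_3})^\bot = tr(B^\bot)$ in the stated generality, then specialize it to $B = C(G_i(q))^\bot$ (viewed as a code over $\mathbb{F}_q$ whose dual over $\mathbb{F}_q$ is generated by $v_i$) to read off the explicit description \eqref{f1}. For the first stage, I would recall that for a linear code $B \subseteq \mathbb{F}_q^N$ with $q = 3^r$, the subfield subcode $B|_{\mathbb{F}_3} = B \cap \mathbb{F}_3^N$ and the trace code $tr(B) = \{(tr(b_1),\dots,tr(b_N)) : b \in B\}$ are both $\mathbb{F}_3$-linear codes of length $N$. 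The key input is the nondegeneracy of the trace pairing: for $x \in \mathbb{F}_q$ and $y \in \mathbb{F}_3$ one has $tr(xy) = y\,tr(x)$, and more importantly the $\mathbb{F}_3$-bilinear form $(x,y) \mapsto tr(xy)$ on $\mathbb{F}_q \times \mathbb{F}_q$ is nondegenerate, while its restriction to the pairing between $\mathbb{F}_q$ and $\mathbb{F}_3^N$ computes the ordinary dot product in $\mathbb{F}_3$ after applying $tr$ coordinatewise.

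Concretely, I would argue as follows. Take $u \in \mathbb{F}_3^N$. Then $u \in tr(B^\bot)^\bot$ (the orthogonal taken inside $\mathbb{F}_3^N$ with the standard dot product) iff $\sum_i u_i\, tr(c_i) = 0$ for all $c \in B^\bot$, iff $tr\!\big(\sum_i u_i c_i\big) = 0$ for all $c \in B^\bot$ (using $u_i \in \mathbb{F}_3$ and $\mathbb{F}_3$-linearity of $tr$), iff $\sum_i u_i c_i = 0$ in $\mathbb{F}_q$ for all $c \in B^\bot$ — here I use that for a fixed vector $w = \sum_i u_i c_i \in \mathbb{F}_q$ to have $tr(w) = 0$ forced by... no, that is not quite it, so instead I observe $\sum_i u_i c_i = u \cdot c$ where $u \in \mathbb{F}_q^N$ has all entries in $\mathbb{F}_3$, and the condition ``$tr(u\cdot c) = 0$ for all $c \in B^\bot$'' combined with the fact that $B^\bot$ is an $\mathbb{F}_q$-subspace (so $\lambda c \in B^\bot$ for all $\lambda \in \mathbb{F}_q$) and the nondegeneracy of $tr$ on $\mathbb{F}_q$ forces $u \cdot c = 0$ for all $c \in B^\bot$, i.e.\ $u \in (B^\bot)^\bot = B$, hence $u \in B \cap \mathbb{F}_3^N = B|_{\mathbb{F}_3}$. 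Reversing the steps gives the opposite inclusion, so $(B|_{\mathbb{F}_3})^\bot = tr(B^\bot)$; alternatively one can cite \cite{MS1} directly, as the statement does, and treat the first stage as quoted.

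For the second stage — deriving \eqref{f1} — I apply the identity with $B$ taken to be the $\mathbb{F}_q$-linear code $\langle v_i \rangle^\bot \subseteq \mathbb{F}_q^{N_i}$, so that $B^\bot = \langle v_i \rangle = \{a v_i : a \in \mathbb{F}_q\}$. By definition \eqref{e1}, $C(G_i(q)) = \{u \in \mathbb{F}_3^{N_i} : u \cdot v_i = 0\} = \langle v_i\rangle^\bot \cap \mathbb{F}_3^{N_i} = B|_{\mathbb{F}_3}$. Hence $C(G_i(q))^\bot = (B|_{\mathbb{F}_3})^\bot = tr(B^\bot) = tr(\langle v_i\rangle) = \{tr(a v_i) : a \in \mathbb{F}_q\}$, and writing $v_i = (Trg_1,\dots,Trg_{N_i})$ and applying $tr$ coordinatewise gives exactly $c_i(a) = (tr(aTrg_1),\dots,tr(aTrg_{N_i}))$, which is \eqref{f1}. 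I would also note in passing that the map $a \mapsto c_i(a)$ is injective (so $C(G_i(q))^\bot$ has $q = 3^r$ codewords and $C(G_i(q))$ has dimension $N_i - r$): this follows from Corollary~\ref{H}, since surjectivity of $Tr: G_i(q) \to \mathbb{F}_q$ means the coordinates of $v_i$ span $\mathbb{F}_q$ over $\mathbb{F}_3$, so $tr(a v_i) = 0$ forces $a = 0$.

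The main obstacle is the careful handling of the two distinct orthogonality operations — the $\mathbb{F}_q$-bilinear dot product on $\mathbb{F}_q^N$ versus the $\mathbb{F}_3$-bilinear dot product on $\mathbb{F}_3^N$ — and ensuring the trace functional correctly translates between them; the crucial point that makes the argument go through is that $B^\bot$ is a full $\mathbb{F}_q$-subspace, which lets one upgrade ``$tr(u\cdot c) = 0$'' to ``$u \cdot c = 0$'' by scaling $c$ and invoking nondegeneracy of the trace form $\mathbb{F}_q \to \mathbb{F}_3$. Everything else is bookkeeping, and since the paper explicitly cites \cite{MS1} for the Delsarte identity, the bulk of the work reduces to the clean specialization in the second stage.
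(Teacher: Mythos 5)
Your proposal is correct and, for the part the paper actually argues (specializing Delsarte's identity to $B=\langle v_i\rangle^\bot$ so that $B|_{\mathbb{F}_3}=C(G_i(q))$ and $tr(B^\bot)=\{tr(av_i):a\in\mathbb{F}_q\}$), it matches the paper exactly; the paper simply cites \cite{MS1} for the identity itself, whereas you supply a correct self-contained proof via nondegeneracy of the trace form and the $\mathbb{F}_q$-scaling of $B^\bot$. The only cosmetic point is that your chain of equivalences directly establishes $tr(B^\bot)^\bot=B|_{\mathbb{F}_3}$, so the stated identity follows by taking orthogonal complements (using that $tr(B^\bot)$ is an $\mathbb{F}_3$-subspace) rather than by ``reversing the steps,'' but this does not affect correctness.
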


\begin{proposition}\label{J}
For every $q=3^r$, the map $\mathbb{F}_q\rightarrow
C(G_i(q))^\bot(a\mapsto c_i(a))$ is an $\mathbb{F}_3$-linear
isomorphism, for $i=1,2$.
\end{proposition}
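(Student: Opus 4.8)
The plan is to show that the $\mathbb{F}_3$-linear map $a\mapsto c_i(a)$ from $\mathbb{F}_q$ to $C(G_i(q))^\bot$ is both surjective and injective. Surjectivity is immediate from the description of the dual code in \eqref{f1} (Theorem \ref{I}), since every element of $C(G_i(q))^\bot$ is by definition of the form $c_i(a)$ for some $a\in\mathbb{F}_q$. $\mathbb{F}_3$-linearity is also clear: $tr$ is $\mathbb{F}_3$-linear and $a\mapsto aTrg_j$ is $\mathbb{F}_3$-linear (indeed $\mathbb{F}_q$-linear) in $a$, so each coordinate of $c_i(a)$ depends $\mathbb{F}_3$-linearly on $a$, hence so does $c_i(a)$. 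Thus the only real content is injectivity, equivalently that the kernel is trivial: if $c_i(a)=0$ then $a=0$.

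First I would reduce injectivity to a statement about the trace form. Suppose $c_i(a)=0$, i.e. $tr(aTrg_j)=0$ for all $j=1,\dots,N_i$. Since $Tr:G_i(q)\rightarrow\mathbb{F}_q$ is surjective by Corollary \ref{H}, as $g$ ranges over $G_i(q)$ the element $Trg$ takes every value in $\mathbb{F}_q$; hence $tr(ab)=0$ for every $b\in\mathbb{F}_q$. Now I would invoke the non-degeneracy of the trace bilinear form $\mathbb{F}_q\times\mathbb{F}_q\rightarrow\mathbb{F}_3$, $(x,y)\mapsto tr(xy)$: the only $a\in\mathbb{F}_q$ with $tr(ab)=0$ for all $b\in\mathbb{F}_q$ is $a=0$. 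This gives $\ker=0$, so the map is injective, completing the proof that it is an $\mathbb{F}_3$-linear isomorphism.

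The argument is short and the only ingredients are Corollary \ref{H} (surjectivity of the trace map on $G_i(q)$) and the standard fact that the trace form of the separable extension $\mathbb{F}_q/\mathbb{F}_3$ is non-degenerate. There is no real obstacle here; the one point worth stating carefully is that surjectivity of $Tr$ on the group is exactly what upgrades ``$tr(aTrg_j)=0$ for the listed $g_j$'' to ``$tr(ab)=0$ for all $b\in\mathbb{F}_q$'', and that is precisely why Corollary \ref{H} was recorded just above. (Alternatively, one could count: the map is a surjection of $\mathbb{F}_3$-vector spaces, so it suffices to check that $\dim_{\mathbb{F}_3}C(G_i(q))^\bot=r$, but the kernel computation above is cleaner and avoids having to know the dimension in advance.)
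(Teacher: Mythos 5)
Your proof is correct and follows essentially the same route as the paper's: linearity and surjectivity are noted to be clear, and injectivity is reduced via Corollary \ref{H} to the statement that $tr(ab)=0$ for all $b\in\mathbb{F}_q$ forces $a=0$, which is the nondegeneracy of the trace form (the paper phrases this as surjectivity of the trace map, with an evident typo writing $\mathbb{F}_2$ for $\mathbb{F}_3$). No substantive difference.
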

\begin{proof}
The maps are clearly $\mathbb{F}_3$-linear and surjective. Let $a$
be in the kernel of either of the map. Then, in view of Corollary
\ref{H}, $tr(a\beta)=0$, for all $\beta\in\mathbb{F}_q$. Since the
trace function $\mathbb{F}_q \rightarrow \mathbb{F}_2$ is
surjective, $a=0$.
\end{proof}
\indent

\section{Power moments of Kloosterman sums with trace nonzero square arguments}
\indent

In this section, we will be able to find, via Pless power moment
identity,  recursive formulas for the power moments of Kloosterman
sums with trace nonzero square arguments in terms of the frequencies
of weights in $C(SO(3,q))$ and $C(O(3,q))$.

\begin{theorem}\label{K}
$($Pless power moment identity, \cite{MS1}$)$
 Let $B$ be an $q$-ary $[n,k]$ code, and let $B_i$(resp. $B_i^\bot$)
 denote the number of codewords of weight $i$ in $B$(resp. in
 $B^\bot$). Then, for $h=0,1,2,\cdots,$
\begin{equation}\label{g1}
\sum_{j=0}^{n}j^hB_j=\sum_{j=0}^{min\{n,h\}}(-1)^jB_j^\bot
\sum_{t=j}^{h}t!S(h,t)q^{k-t}(q-1)^{t-j}{\binom{n-j}{n-t}},
\end{equation}
where $S(h,t)$ is the Stirling number of the second kind defined in
(\ref{i}).
\end{theorem}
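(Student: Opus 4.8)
The plan is to deduce the identity (\ref{g1}) from the MacWilliams identity by applying a differential operator and then specializing. Write the weight enumerators $W_B(x,y)=\sum_{i=0}^{n}B_i x^{n-i}y^{i}$ and $W_{B^\bot}(x,y)=\sum_{j=0}^{n}B_j^{\bot}x^{n-j}y^{j}$. Since $(B^\bot)^\bot=B$ and $|B|=q^{k}$, the MacWilliams identity (also found in \cite{MS1}) can be written as $q^{n-k}W_B(x,y)=W_{B^\bot}(x+(q-1)y,\,x-y)$, that is,
\begin{equation*}
q^{n-k}\sum_{i=0}^{n} B_i\, x^{n-i}y^{i} = \sum_{j=0}^{n} B_j^{\bot}\,(x+(q-1)y)^{n-j}(x-y)^{j}. \tag{$\star$}
\end{equation*}
This polynomial identity in $x,y$ is the only structural input; the rest is a computation that extracts the $h$-th power moment $\sum_{i} i^{h}B_i$ from it.

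Next I would apply the Euler operator $\theta=y\frac{\partial}{\partial y}$ a total of $h$ times to both sides of $(\star)$ and then set $x=y=1$. On the left, $\theta^{h}$ sends $y^{i}$ to $i^{h}y^{i}$, so after specialization the left side becomes $q^{n-k}\sum_{i=0}^{n}i^{h}B_i$, which is $q^{n-k}$ times the target. On the right I would use the standard operator identity $\theta^{h}=\sum_{t=0}^{h}S(h,t)\,y^{t}\frac{\partial^{t}}{\partial y^{t}}$, provable by a short induction on $h$ using the recurrence $S(h+1,t)=t\,S(h,t)+S(h,t-1)$; this is precisely where the Stirling numbers $S(h,t)$ enter. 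The right side thus becomes $\sum_{j}B_j^{\bot}\sum_{t=0}^{h}S(h,t)\,y^{t}\,\partial_y^{t}\big[(x+(q-1)y)^{n-j}(x-y)^{j}\big]$, still to be evaluated at $x=y=1$.

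The heart of the argument is evaluating $\partial_y^{t}\big[(x+(q-1)y)^{n-j}(x-y)^{j}\big]$ at $x=y=1$. Expanding by Leibniz as $\sum_{s=0}^{t}\binom{t}{s}\partial_y^{s}(x+(q-1)y)^{n-j}\cdot\partial_y^{t-s}(x-y)^{j}$, I observe that $x-y$ vanishes at $x=y$, so $\partial_y^{t-s}(x-y)^{j}$ survives only when exactly $j$ derivatives land on $(x-y)^{j}$, i.e.\ $t-s=j$; this forces $s=t-j$ (hence $t\ge j$) and contributes $(-1)^{j}j!$. The complementary factor $\partial_y^{t-j}(x+(q-1)y)^{n-j}$ at $x=y=1$ equals $(q-1)^{t-j}\frac{(n-j)!}{(n-t)!}q^{n-t}$, since $x+(q-1)y=q$ there. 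Using the elementary reductions $\binom{t}{t-j}j!\,(t-j)!=t!$ and $\frac{(n-j)!}{(n-t)!}=(t-j)!\binom{n-j}{n-t}$, the surviving constant is exactly $t!\binom{n-j}{n-t}q^{n-t}(q-1)^{t-j}$. Hence the right side collapses to $\sum_{j}(-1)^{j}B_j^{\bot}\sum_{t=j}^{h}t!\,S(h,t)\,q^{n-t}(q-1)^{t-j}\binom{n-j}{n-t}$. Equating with the left side and cancelling the common factor $q^{n-k}$ (turning $q^{n-t}$ into $q^{k-t}$) gives (\ref{g1}); the outer range truncates at $\min\{n,h\}$ automatically, because $\binom{n-j}{n-t}$ forces $j\le t\le n$ while $S(h,t)=0$ for $t>h$.

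I expect the main obstacle to be the bookkeeping in the Leibniz step: one must argue cleanly that specialization at $x=y=1$ annihilates every Leibniz term except the single one with $s=t-j$, and then verify that the surviving binomial and factorial constants reorganize exactly into $t!\binom{n-j}{n-t}$. None of this is deep, but a misplaced power of $q$ or a stray factorial would propagate into a wrong formula, so I would recheck the two factorial identities and the evaluation $x+(q-1)y=q$ at $x=y=1$ with care. The only genuinely external ingredient is the MacWilliams identity $(\star)$ itself, which I take as known from \cite{MS1}.
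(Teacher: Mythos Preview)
Your derivation is correct: applying the Euler operator $\theta^{h}=\sum_{t}S(h,t)y^{t}\partial_y^{t}$ to the MacWilliams identity and specializing at $x=y=1$ is exactly the standard route to the Pless power moment identity, and your Leibniz bookkeeping (the collapse to the single term $s=t-j$, and the factorial reorganization into $t!\binom{n-j}{n-t}$) checks out.

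There is, however, nothing to compare against in the paper itself. Theorem~\ref{K} is quoted as a known result with a bare citation to \cite{MS1}; the paper supplies no proof and uses the identity purely as a black box. So you have not reproduced the paper's argument---you have supplied one where the paper gave none. What you wrote is essentially the proof one finds in MacWilliams--Sloane, which is where the paper sends the reader anyway.
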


\begin{lemma}\label{LL}
Let $c_i(a)=(tr(aTrg_1),\cdots,tr(aTrg_{N_i}))\in C(G_i(q))^\bot$,
for $a\in\mathbb{F}_{q}^{*}$, and $i=1,2$. Then the Hamming weight
$w(c_i(a))$ can be expressed as follows:
\begin{equation}\label{h1}
w(c_i(a))={\frac{2qi}{3}}(q^2-1-(Re\lambda(a))K(\lambda;a^2)),~for~i=1,2.
\end{equation}
\begin{proof} For $i=1,2,$
\begin{align*}
\begin{split}
w(c_i(a))&=\sum_{j=1}^{N_i}(1-{\frac{1}{3}}\sum_{\alpha\in\mathbb{F}_3}
\lambda_0(\alpha tr(aTrg_j)))\\
&=N_i-{\frac{1}{3}}\sum_{\alpha\in\mathbb{F}_3}\sum_{w\in
G_i(q)}\lambda(\alpha aTrw)\\
&={\frac{2}{3}}N_i-{\frac{1}{3}}\sum_{\alpha\in\mathbb{F}_3^*}\sum_{w\in
G_i(q)}\lambda(\alpha aTrw).
\end{split}
\end{align*}
Our results now follow from (\ref{s}), (\ref{u}), (\ref{v}) and
(\ref{d1}).
\end{proof}
\end{lemma}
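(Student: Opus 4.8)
The plan is to compute the Hamming weight of an arbitrary codeword $c_i(a)$ in $C(G_i(q))^\perp$ by the standard character-sum trick that detects zero coordinates. First I would write, for each coordinate indexed by $g_j$, the indicator that $tr(aTrg_j)\neq 0$ as $1-\frac{1}{3}\sum_{\alpha\in\mathbb{F}_3}\lambda_0(\alpha\, tr(aTrg_j))$, so that summing over $j$ gives
\begin{align*}
w(c_i(a))=\sum_{j=1}^{N_i}\Bigl(1-\tfrac{1}{3}\sum_{\alpha\in\mathbb{F}_3}\lambda_0(\alpha\, tr(aTrg_j))\Bigr).
\end{align*}
Here it is crucial that $\lambda_0$ is the canonical additive character of $\mathbb{F}_3$ and that $\lambda_0(\alpha\, tr(x))=\lambda(\alpha a Tr w)$ when $x=aTrw$, using the definition $\lambda(x)=e^{2\pi i\, tr(x)/3}$ and the $\mathbb{F}_3$-linearity of the trace function $tr:\mathbb{F}_q\to\mathbb{F}_3$. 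This converts the inner sum over $j$ into a sum over $w\in G_i(q)$.

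Next I would separate the $\alpha=0$ term, which contributes $\frac{1}{3}N_i$, from the two terms with $\alpha\in\mathbb{F}_3^*$, obtaining
\begin{align*}
w(c_i(a))=\tfrac{2}{3}N_i-\tfrac{1}{3}\sum_{\alpha\in\mathbb{F}_3^*}\sum_{w\in G_i(q)}\lambda(\alpha a Tr w).
\end{align*}
Then I would evaluate the two remaining character sums using Corollary \ref{D}. For $i=1$, equation (\ref{u}) gives $\sum_{w\in G_1(q)}\lambda(\alpha a Tr w)=\lambda(\alpha a)qK(\lambda;\alpha^2 a^2)$; since $\alpha\in\{1,-1\}$ we have $\alpha^2=1$, so the Kloosterman factor is $K(\lambda;a^2)$ in both cases, and $\lambda(\alpha a)$ runs over $\lambda(a)$ and $\lambda(-a)$. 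By (\ref{s}), $\lambda(a)+\lambda(-a)=2\,\mathrm{Re}\,\lambda(a)$. The same computation works for $i=2$ using (\ref{v}), where the sum over $w\in G_2(q)$ already contributes a factor $(\lambda(\alpha a)+\lambda(-\alpha a))qK(\lambda;a^2)=2(\mathrm{Re}\,\lambda(a))qK(\lambda;a^2)$ independent of $\alpha$. Collecting terms, $\frac{1}{3}$ times the $\mathbb{F}_3^*$-sum equals $\frac{2q}{3}(\mathrm{Re}\,\lambda(a))K(\lambda;a^2)$ for $i=1$ and $\frac{4q}{3}(\mathrm{Re}\,\lambda(a))K(\lambda;a^2)$ for $i=2$; in either case this is $\frac{2q i}{3}\cdot\frac{1}{N_i/(q(q^2-1))}$... more cleanly, using $N_i=iq(q^2-1)$ from (\ref{d1}), one checks $\tfrac{2}{3}N_i=\tfrac{2qi}{3}(q^2-1)$ and the subtracted term equals $\tfrac{2qi}{3}(\mathrm{Re}\,\lambda(a))K(\lambda;a^2)$, which gives exactly (\ref{h1}).

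I do not anticipate a serious obstacle here; the argument is a routine but careful bookkeeping exercise. The one point demanding attention is the interaction between the two signs $\alpha=\pm 1$ and the two groups: for $SO(3,q)$ the $\alpha$-dependence disappears only after noticing $\alpha^2=1$ and then re-combining $\lambda(a)+\lambda(-a)$, whereas for $O(3,q)$ each summand over $w$ is already sign-symmetric so the two values of $\alpha$ simply double the count. Keeping the factor of $i$ straight through $N_i=iq(q^2-1)$ is the only place an error could creep in, so I would double-check the arithmetic $\tfrac{2}{3}N_i - \tfrac{1}{3}\cdot(\text{sum}) = \tfrac{2qi}{3}\bigl(q^2-1-(\mathrm{Re}\,\lambda(a))K(\lambda;a^2)\bigr)$ at the end.
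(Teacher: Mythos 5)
Your proposal is correct and follows essentially the same route as the paper: the indicator-function character sum over $\mathbb{F}_3$, separation of the $\alpha=0$ term, and evaluation of the remaining sums via (\ref{s}), (\ref{u}), (\ref{v}) and (\ref{d1}). You simply spell out the final bookkeeping (combining $\alpha=\pm1$ and tracking the factor $i$ through $N_i=iq(q^2-1)$) that the paper leaves implicit, and your arithmetic checks out in both cases.
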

\indent

Fix $i(i=1,2)$, and let
$u=(u_1,\cdots,u_{N_i})\in\mathbb{F}_3^{N_i}$, with $\nu_\beta$ 1's
and $\mu_\beta$ 2's in the coordinate places where $Tr(g_j)=\beta$,
for each $\beta\in\mathbb{F}_{q}^{}$. Then we see from the
definition of the code $C(G_i(q))$(cf. (\ref{e1})) that $u$ is a
codeword with weight $j$ if and only if
$\sum_{\beta\in\mathbb{F}_q}\nu_\beta+\sum_{\beta\in\mathbb{F}_q}\mu_\beta=j$
and $\sum_{\beta\in\mathbb{F}_q}\nu_\beta
\beta=\sum_{\beta\in\mathbb{F}_q}\mu_\beta \beta$(an identity in
$\mathbb{F}_q$). Note that there are
$\prod_{\beta\in\mathbb{F}_q}{\binom{n_i(\beta)}{\nu_\beta,\mu_\beta}}$(cf.
(\ref{d}), (\ref{e})) many such codewords with weight $j$. Now, we
get the following formulas in (\ref{i1})-(\ref{j1}), by using the
explicit values of $n_i(\beta)$ in (\ref{b1}),(\ref{c1})(cf.
(\ref{x}), (\ref{y})).

\begin{theorem}\label{M}
Let $q=3^r$ be as before, and let $\{C_{i,j}\}_{j=0}^{N_i}$ be the
weight distribution of $C(G_i(q))$, for $i=1,2.$ Then

$(1)$
\begin{equation}\label{i1}
C_{1,j}=\sum\prod_{\beta\in\mathbb{F}_q}{\binom{n_1(\beta)}{\nu_\beta,\mu_\beta}}~
(j=0,\cdots,N_1),\qquad \qquad \qquad \qquad
\end{equation}
with
\begin{equation*}
\begin{split}
n_1(\beta)&=q^2-q+q\delta(1,q;\beta-1)\\
&=\sum{\binom{q^2}{\nu_0,\mu_0}}{\binom{q^2}{\nu_2,\mu_2}}
\prod_{\beta^2-2\beta\neq0~square}{\binom{q^2+q}{\nu_\beta,\mu_\beta}}\\
&\quad\qquad\qquad\qquad\qquad\qquad\qquad\times\prod_{\beta^2-2\beta~nonsquare}
{\binom{q^2-q}{\nu_\beta,\mu_\beta}},
\end{split}
\end{equation*}

$(2)$
\begin{equation}\label{j1}
C_{2,j}=\sum\prod_{\beta\in\mathbb{F}_q}{\binom{n_2(\beta)}{\nu_\beta,\mu_\beta}}~
(j=0,\cdots,N_2),\qquad \qquad \qquad \qquad
\end{equation}
with
\begin{equation*}
n_2(\beta)=2q^2-2q+q\delta(1,q;\beta-1)+q\delta(1,q;\beta+1).\qquad\qquad\quad
\end{equation*}
\end{theorem}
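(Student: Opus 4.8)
The plan is to prove Theorem \ref{M} by a direct counting argument based on the structure of the code $C(G_i(q))$ as defined in (\ref{e1}), together with the explicit values of the trace-level sizes $n_i(\beta)$ supplied by Proposition \ref{G} and Remark \ref{F}. First I would fix $i\in\{1,2\}$ and recall that a codeword $u=(u_1,\dots,u_{N_i})\in\mathbb{F}_3^{N_i}$ lies in $C(G_i(q))$ precisely when $u\cdot v_i=0$, where $v_i=(Trg_1,\dots,Trg_{N_i})$. Grouping the coordinate positions $j$ according to the value $\beta=Tr(g_j)\in\mathbb{F}_q$, and letting $\nu_\beta$ (resp. $\mu_\beta$) be the number of positions in the $\beta$-block carrying the entry $1$ (resp. $2=-1$ in $\mathbb{F}_3$), one has $u\cdot v_i=\sum_{\beta}(\nu_\beta-\mu_\beta)\beta$ computed in $\mathbb{F}_q$. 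Since $-1\equiv 2$, this vanishes exactly when $\sum_{\beta}\nu_\beta\beta=\sum_{\beta}\mu_\beta\beta$ in $\mathbb{F}_q$, which is the second constraint in the statement, while the weight of $u$ is $\sum_\beta(\nu_\beta+\mu_\beta)=j$, the first constraint.

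Next I would count: given a prescribed family $\{\nu_\beta,\mu_\beta\}$ satisfying both constraints, the number of codewords realizing it is $\prod_{\beta\in\mathbb{F}_q}\binom{n_i(\beta)}{\nu_\beta,\mu_\beta}$, because within the $\beta$-block — which has exactly $n_i(\beta)=N_{G_i(q)}(\beta)$ positions by definition (\ref{a1}) — we independently choose $\nu_\beta$ positions for the $1$'s and $\mu_\beta$ positions for the $2$'s, and the multinomial coefficient $\binom{n_i(\beta)}{\nu_\beta,\mu_\beta}$ (as normalized in (\ref{d})--(\ref{e}), so that it is $0$ when $\nu_\beta+\mu_\beta>n_i(\beta)$) counts exactly this. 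Summing over all admissible families gives $C_{i,j}=\sum\prod_{\beta}\binom{n_i(\beta)}{\nu_\beta,\mu_\beta}$, which is (\ref{i1}) for $i=1$ and (\ref{j1}) for $i=2$. For the "with $n_i(\beta)=\dots$" displays, I would simply substitute the formulas (\ref{b1}), (\ref{c1}) from Proposition \ref{G} and then, for part (1), rewrite the product $\prod_\beta\binom{n_1(\beta)}{\nu_\beta,\mu_\beta}$ by splitting the index set $\mathbb{F}_q$ according to the value of $\delta(1,q;\beta-1)\in\{0,1,2\}$ via (\ref{y}): $\delta(1,q;\beta-1)=2$ (so $n_1(\beta)=q^2+q$) when $(\beta-1)^2-1=\beta^2-2\beta$ is a nonzero square; $\delta(1,q;\beta-1)=1$ (so $n_1(\beta)=q^2$) when $\beta^2-2\beta=0$, i.e. $\beta\in\{0,2\}$; and $\delta(1,q;\beta-1)=0$ (so $n_1(\beta)=q^2-q$) when $\beta^2-2\beta$ is a nonsquare. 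This reproduces exactly the factored form with the two singled-out factors $\binom{q^2}{\nu_0,\mu_0}\binom{q^2}{\nu_2,\mu_2}$ and the two products over squares and nonsquares.

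There is essentially no hard analytic step here; the argument is a bookkeeping identity and follows the template already used in the excerpt's discussion preceding the theorem. The only point requiring a little care — and the one I would state most explicitly — is the passage from the inner-product condition over $\mathbb{F}_q$ to the "identity in $\mathbb{F}_q$" $\sum_\beta\nu_\beta\beta=\sum_\beta\mu_\beta\beta$, namely that the $\mathbb{F}_3$-coefficient $2$ must be read as $-1$ when multiplying the $\mathbb{F}_q$-element $\beta$, and that the partition of $\{1,\dots,N_i\}$ into trace-fibers is exactly what makes $n_i(\beta)$ appear as the block size. I would also note that by Corollary \ref{H} every fiber is nonempty (indeed $n_i(\beta)\ge q^2-q>0$), so no block is vacuous, though this is not strictly needed since the convention (\ref{e}) handles degenerate multinomials automatically. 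With these observations in place the theorem follows immediately.
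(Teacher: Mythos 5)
Your proof is correct and follows essentially the same route as the paper: partition the coordinates into trace-fibers of sizes $n_i(\beta)$, observe that $u\cdot v_i=0$ and $w(u)=j$ translate into the two stated constraints on $\{\nu_\beta\}$, $\{\mu_\beta\}$, count via the multinomial coefficients, and substitute the values of $n_i(\beta)$ from Proposition \ref{G} together with the case analysis of $\delta(1,q;\beta-1)$ from (\ref{y}). Your treatment is in fact slightly more explicit than the paper's (which compresses this into the paragraph preceding the theorem), notably in spelling out why $2=-1$ in $\mathbb{F}_3$ yields $\sum_\beta\nu_\beta\beta=\sum_\beta\mu_\beta\beta$.
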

Here in both (\ref{i1}) and (\ref{j1}) the unspecified sums run over
all the sets of nonnegative integers
$\{\nu_\beta\}_{\beta\in\mathbb{F}_q}$ and
$\{\mu_\beta\}_{\beta\in\mathbb{F}_q}$ satisfying
\begin{align*}
\sum_{\beta\in\mathbb{F}_q} \nu_\beta
+\sum_{\beta\in\mathbb{F}_q}\mu_\beta=j\quad and~
\sum_{\beta\in\mathbb{F}_q} \nu_\beta
\beta=\sum_{\beta\in\mathbb{F}_q}\mu_\beta \beta,
\end{align*}
and, for every $\beta\in\mathbb{F}_q$, $\delta(1,q;\beta)$ is as in (\ref{y}).\\

The recursive formula in the following theorem follows from the
study of ternary linear codes associated with the symplectic group
$Sp(2,q)=SL(2,q)$. It is slightly modified from its original
version, which makes it more usable in below. \\

\begin{theorem}\label{L}$($\cite{DJ1}$)$
For $h=1,2,3,\cdots$,
\begin{equation}\label{k1}
\begin{split}
2(\frac{2q}{3})^h\sum_{j=0}^{h}&(-1)^j{\binom{h}{j}}(q^2-1)^{h-j}SK^{j}\\
&=q\sum_{j=0}^{min\{N_1,h\}}(-1)^j
\hat{C}_{j}\sum_{t=j}^{h}t!S(h,t)3^{-t}2^{t-j}
{\binom{N_1-j}{N_1-t}},
\end{split}
\end{equation}
where $N_1=q(q^2-1)=|Sp(2,q)|=|SO(3,q)|$, $S(h,t)$ indicates the
Stirling number of the second kind as in (\ref{i}), and
$\{\hat{C}_j\}_{j=0}^{N_1}$ denotes the weight distribution of the
ternary linear code $C(Sp(2,q))$, given by
\begin{align*}
\begin{split}
\hat{C}_j&=\sum_{\beta\in\mathbb{F}_q}\prod_{\beta}{\binom{q\delta(1,q;\beta)+q^2-q}{\nu_\beta,\mu_\beta}}\\
&=\sum{\binom{q^2}{\nu_1,\mu_1}}{\binom{q^2}{\nu_{-1},\mu_{-1}}}
\prod_{\beta^2-1\neq0~square}{\binom{q^2+q}{\nu_\beta,\mu_\beta}}\prod_{\beta^2-1~nonsquare}{\binom{q^2-q}{\nu_\beta,\mu_\beta}}
\end{split}
\end{align*}
$(j=0,\cdots,N_1)$.\\
Here the sum is over all the sets of nonnegative integers
$\{\nu_\beta\}_{\beta\in\mathbb{F}_q}$ and
$\{\mu_\beta\}_{\beta\in\mathbb{F}_q}$ satisfying
$\sum_{\beta\in\mathbb{F}_q} \nu_\beta
+\sum_{\beta\in\mathbb{F}_q}\mu_\beta=j$ and
$\sum_{\beta\in\mathbb{F}_q} \nu_\beta
\beta=\sum_{\beta\in\mathbb{F}_q}\mu_\beta \beta$.
\end{theorem}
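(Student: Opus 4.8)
The plan is to carry out, for the symplectic group $Sp(2,q)=SL(2,q)$, the same code construction as in Section 4 and then to apply the Pless power moment identity (Theorem \ref{K}) exactly once. First I would form the ternary linear code $C(Sp(2,q))$ of length $N_1=|Sp(2,q)|=q(q^2-1)$ (cf. (\ref{p})): fix an ordering $g_1,\dots,g_{N_1}$ of $Sp(2,q)$, put $v=(Trg_1,\dots,Trg_{N_1})$, and set $C(Sp(2,q))=\{u\in\mathbb{F}_3^{N_1}\mid u\cdot v=0\}$. By Delsarte's theorem (Theorem \ref{I}) its dual is $C(Sp(2,q))^\bot=\{c(a)=(tr(aTrg_1),\dots,tr(aTrg_{N_1}))\mid a\in\mathbb{F}_q\}$, and the argument of Proposition \ref{J} shows that $a\mapsto c(a)$ is an $\mathbb{F}_3$-linear isomorphism; this uses only that $Tr\colon Sp(2,q)\to\mathbb{F}_q$ is surjective, which follows exactly as in Proposition \ref{G} and Corollary \ref{H} from $N_{Sp(2,q)}(\beta)=q^2-q+q\delta(1,q;\beta)>0$ (combine (\ref{z}), Proposition \ref{E} with $m=1$, and the Gauss sum recalled below). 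In particular $C(Sp(2,q))^\bot$ is a ternary $[N_1,r]$ code, and every nonzero codeword of it is $c(a)$ for a unique $a\in\mathbb{F}_q^*$.

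Next I would compute the weights $w(c(a))$. As recalled in Section 3, the Gauss sum for $SO(3,q)$ equals $\psi(1)$ times that for $Sp(2,q)$; together with Corollary \ref{D} and the change of variables used there, this gives $\sum_{w\in Sp(2,q)}\lambda(aTrw)=qK(\lambda;a^2)$ for $a\in\mathbb{F}_q^*$. Repeating the computation of Lemma \ref{LL} (and using $(2a)^2=a^2$ in characteristic $3$) then yields $w(c(a))=\frac{2q}{3}\bigl(q^2-1-K(\lambda;a^2)\bigr)$. The one extra elementary fact I would record is that, since every nonzero square of $\mathbb{F}_q$ has exactly two square roots, $\sum_{a\in\mathbb{F}_q^*}K(\lambda;a^2)^j=2SK^j$ for all $j\geq0$.

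Now I would evaluate both sides of the Pless identity for the code $B=C(Sp(2,q))^\bot$. For $h\geq1$ the zero codeword contributes nothing, so the left side is $\sum_j j^hB_j=\sum_{a\in\mathbb{F}_q^*}w(c(a))^h=\bigl(\frac{2q}{3}\bigr)^h\sum_{a\in\mathbb{F}_q^*}\bigl(q^2-1-K(\lambda;a^2)\bigr)^h$; expanding by the binomial theorem, interchanging the two summations, and applying $\sum_aK(\lambda;a^2)^j=2SK^j$ turns this into $2\bigl(\frac{2q}{3}\bigr)^h\sum_{j=0}^h(-1)^j\binom{h}{j}(q^2-1)^{h-j}SK^j$, the left side of (\ref{k1}). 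On the other side $B^\bot=C(Sp(2,q))$, whose weight distribution $\{\hat C_j\}$ is obtained exactly as in Theorem \ref{M} by counting, for each $j$, the codewords having $\nu_\beta$ ones and $\mu_\beta$ twos among the coordinates where $Trg_j=\beta$, with $N_{Sp(2,q)}(\beta)=q\delta(1,q;\beta)+q^2-q$. Hence Theorem \ref{K}, applied with alphabet size $3$, length $n=N_1$, and dimension $k=r$ (so that $3^{k-t}=q\,3^{-t}$ and $(3-1)^{t-j}=2^{t-j}$), produces precisely the right side of (\ref{k1}); equating the two sides gives the claim.

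The proof is essentially bookkeeping, and the only point that needs care is keeping the Pless alphabet size $3$ strictly separate from the field size $q=3^r$, and correctly identifying $k=\dim_{\mathbb{F}_3}C(Sp(2,q))^\bot=r$ so that the factor $3^{k-t}$ in Theorem \ref{K} collapses to $q\,3^{-t}$; this identification is exactly the injectivity half of the isomorphism $a\mapsto c(a)$, i.e. the surjectivity of $Tr$ on $Sp(2,q)$. The qualifier ``slightly modified from its original version'' refers to nothing deep: one merely carries along the factor $2$ coming from $\sum_aK(\lambda;a^2)^j=2SK^j$ and writes the upper summation limit as $\min\{N_1,h\}$, so that (\ref{k1}) can be substituted directly into the recursion of Theorem \ref{A}.
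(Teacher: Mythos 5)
Your proposal is correct and follows exactly the route the paper indicates for this result: build the ternary code $C(Sp(2,q))$, identify its dual via Delsarte's theorem, compute $w(c(a))=\frac{2q}{3}\bigl(q^2-1-K(\lambda;a^2)\bigr)$ from the Gauss sum $\sum_{w\in Sp(2,q)}\lambda(aTrw)=qK(\lambda;a^2)$, and equate the two sides of the Pless power moment identity with alphabet size $3$, length $N_1$, and $k=r$. This is the same argument the paper itself carries out in Section 5 for $SO(3,q)$ and $O(3,q)$ (the paper only cites \cite{DJ1} for the $Sp(2,q)$ case rather than reproving it), so no discrepancy to report.
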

\indent

We are now ready to apply the Pless power moment identity in
(\ref{g1}) to $C(G_i(q))^\bot$ for $i=1,2$, in order to obtain the
result in Theorem \ref{A}(cf. (\ref{f})-(\ref{h}),
(\ref{j})-(\ref{l})) about recursive formulas. We do this for
$i=1,2$ at the same time.\\

The left hand side of that identity in (\ref{g1}) is equal to
\begin{equation}\label{l1}
\sum_{a\in\mathbb{F}_{q}^{*}}w(c_i(a))^h,
\end{equation}
with the $w(c_i(a))$ given by (\ref{h1}).\\

In below, $\lq\lq$the sum over $tra=0$ (resp. $tra\neq0$)" will mean
$\lq\lq$the
sum over all $a\in\mathbb{F}_{q}^{*}$ with $tra=0$(resp. $tra\neq0$)."\\

(\ref{l1}) is given by

\begin{align*}
\begin{split}
({\frac{2qi}{3}})^h&\sum_{a\in\mathbb{F}_{q}^{*}}(q^2-1-(Re\lambda(a))K(\lambda;a^2))^h\\
=&({\frac{2qi}{3}})^h\sum_{tra=0}(q^2-1-K(\lambda;a^2))^h\\
&\qquad+({\frac{2qi}{3}})^h\sum_{tra\neq0}(q^2-1+{\frac{1}{2}}K(\lambda;a^2))^h\\
(noting&~ that~ Re\lambda(a)=1,~ if~ tra=0; Re\lambda(a)=-\frac{1}{2},~ if~ tra\neq0, i.e., tra=1,2)\\
=&({\frac{2qi}{3}})^h\sum_{tra=0}\sum_{j=0}^h(-1)^j{\binom{h}{j}}(q^2-1)^{h-j}K(\lambda;a^2)^j\\
&\qquad+({\frac{2qi}{3}})^h\sum_{tra\neq0}\sum_{j=0}^h{\binom{h}{j}}(q^2-1)^{h-j}2^{-j}K(\lambda;a^2)^j\\
=&({\frac{2qi}{3}})^h\sum_{j=0}^h(-1)^j{\binom{h}{j}}(q^2-1)^{h-j}(2SK^j-T_{12}SK^j)\\
&\qquad\qquad\qquad\qquad\qquad\qquad\qquad\text{($\psi=\lambda$ case of (\ref{a}), (\ref{b}))}\\
&\qquad+({\frac{2qi}{3}})^h\sum_{j=0}^h{\binom{h}{j}}(q^2-1)^{h-j}2^{-j}T_{12}SK^j\\
=&i^h2({\frac{2q}{3}})^h\sum_{j=0}^h(-1)^j{\binom{h}{j}}(q^2-1)^{h-j}SK^j\\
&\qquad+({\frac{2qi}{3}})^h\sum_{j=0}^h((-1)^{j+1}+2^{-j}){\binom{h}{j}}(q^2-1)^{h-j}T_{12}SK^j\\
\end{split}
\end{align*}
\begin{equation}\label{m1}
\begin{split}
=i^hq&\sum_{j=0}^{min\{N_1,h\}}(-1)^j
\hat{C}_{j}\sum_{t=j}^{h}t!S(h,t)3^{-t}2^{t-j}
{\binom{N_1-j}{N_1-t}}~\text{(from (\ref{k1}))}\hspace{8mm}\\
&+({\frac{2qi}{3}})^h\sum_{j=0}^h((-1)^{j+1}+2^{-j}){\binom{h}{j}}(q^2-1)^{h-j}T_{12}SK^j.
\end{split}
\end{equation}

On the other hand, the right hand side of  (\ref{g1}) is
\begin{equation}\label{n1}
q\sum_{j=0}^{min\{N_1,h\}}(-1)^j
C_{i,j}\sum_{t=j}^{h}t!S(h,t)3^{-t}2^{t-j} {\binom{N_i-j}{N_i-t}}.
\end{equation}
\indent Here one has to note that
$dim_{\mathbb{F}_2}C(SO(3,q))=dim_{\mathbb{F}_2}C(O(3,q))=r$(cf.
Prop. \ref{J}) and to separate the term corresponding to $l=h$ of
the second sum in (\ref{m1}). Our main results in Theorem \ref{a}
now follow by equating (\ref{m1}) and (\ref{n1}).\\

\begin{corollary}\label{N}
Let $q=3^r$. Then we have the following.
\begin{enumerate}
\item\label{Na} $SK={\frac{1}{2}}\{(-1)^rq+1\}$,
\item\label{Nb} $T_0SK={\frac{1}{3}}(-1)^rq+1$,
\item\label{Nc} $T_{12}SK={\frac{2}{3}}(-1)^r q$.
\end{enumerate}
\begin{proof}
From either (\ref{f}) or (\ref{j}), we get (\ref{Nc}).  (\ref{Na})
follows from our previous result (\cite{DJ1}, (4)) or can be derived
directly as follows.
\begin{align*}
\begin{split}
SK&={\frac{1}{2}}\sum_{a\in\mathbb{F}_{q}^{*}}K(\lambda;a^2)\\
&={\frac{1}{2}}\sum_{a\in\mathbb{F}_{q}^{*}}\sum_{\alpha\in\mathbb{F}_{q}^{*}}
\lambda(\alpha_{}^{} +a^2\alpha_{}^{-1})\\
&={\frac{1}{2}}\sum_{a\in\mathbb{F}_{q}^{*}}\sum_{\alpha\in\mathbb{F}_{q}^{*}}
\lambda(a(\alpha_{}^{} +\alpha_{}^{-1}))\\
\end{split}
\end{align*}
\begin{equation}\label{o1}
\qquad\qquad\quad\qquad={\frac{1}{2}}\sum_{\alpha\in\mathbb{F}_{q}^{*}}\sum_{a\in\mathbb{F}_{q}^{}}
\lambda(a(\alpha_{}^{} +\alpha_{}^{-1}))-{\frac{1}{2}}(q-1)
\end{equation}
\begin{equation*}
\hspace{11.5mm}=
\begin{cases}
\frac{1}{2}2q-\frac{1}{2}(q-1),& \text {if $r$ even},\\
-\frac{1}{2}(q-1),& \text {if $r$ odd}.
\end{cases}
\end{equation*}
In (\ref{o1}), we note that $\alpha+\alpha^{-1}=0$ has a solution in
$\mathbb{F}_{q}^{*}$ if and only if -1 is a square in $\mathbb{F}_q$
if and only if $r$ is even, in which case there are two distinct
solutions. Finally, (\ref{Nb}) follows from the relation (\ref{b})
with $h=1$ and $\psi=\lambda$.
\end{proof}
\end{corollary}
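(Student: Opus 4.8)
\textbf{Proof plan for Corollary \ref{N}.}

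The plan is to establish the three values in order $(3)\Rightarrow(1)\Rightarrow(2)$, since $(3)$ drops out directly from the recursive formula, $(1)$ is essentially known but admits a short direct derivation, and $(2)$ then follows from the linear relation \eqref{b}. First I would specialize the main recursion \eqref{f} (equivalently \eqref{j}) to $h=1$. Since the first sum in \eqref{f} is empty for $h=1$, the identity reduces to
\begin{align*}
(1+\tfrac12)T_{12}SK^1=q^{0}\sum_{j=0}^{\min\{N_1,1\}}(-1)^j(C_{1,j}-\hat C_j)\sum_{t=j}^{1}t!\,S(1,t)3^{1-t}2^{t-2-j}\binom{N_1-j}{N_1-t}.
\end{align*}
One computes that $S(1,0)=0$ and $S(1,1)=1$, so only $t=1$ survives; for $j=0$ the term is $(C_{1,0}-\hat C_0)\cdot 3^0\cdot 2^{-1}\binom{N_1}{N_1-1}$ and for $j=1$ it is $-(C_{1,1}-\hat C_1)\cdot 2^{-2}\binom{N_1-1}{N_1-1}$. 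Here $C_{1,0}=\hat C_0=1$ (the all-zero codeword), so the $j=0$ contribution vanishes, and $C_{1,1}=\hat C_1=0$ since neither code has a codeword of weight one (a single nonzero coordinate cannot satisfy the defining linear condition, as $\mathrm{Tr}$ is surjective and in particular nonzero somewhere). That would force the right side to zero, which is wrong, so in fact the correct bookkeeping must retain the $t=1,j=0$ and $t=1,j=1$ terms with the genuine values $C_{1,1},\hat C_1$ — and the clean way to avoid this combinatorial subtlety is to instead read off $T_{12}SK^1$ from the structure of \eqref{l1}: evaluating the left side $\sum_{a}w(c_i(a))$ directly using Lemma \ref{LL} and Corollary \ref{D}. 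This gives $T_{12}SK^1 = \tfrac23(-1)^r q$ after using that $\sum_{tr a\neq 0}K(\lambda;a^2)$ relates to $SK^1$ and $T_0SK^1$ via \eqref{b}; I will carry out this direct evaluation rather than trusting the raw $h=1$ recursion.

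For part $(1)$, I would give the self-contained computation already sketched: write $SK=\tfrac12\sum_{a\in\mathbb{F}_q^*}K(\lambda;a^2)$, expand the Kloosterman sum, and use the substitution $\alpha\mapsto\alpha$, $a^2\alpha^{-1}=a(a\alpha^{-1})$ — more precisely $K(\lambda;a^2)=\sum_{\alpha\in\mathbb{F}_q^*}\lambda(a(\alpha+\alpha^{-1}))$ after replacing $\alpha$ by $a\alpha$. Swapping the order of summation, $\sum_{a\in\mathbb{F}_q^*}\lambda(a(\alpha+\alpha^{-1}))$ equals $\sum_{a\in\mathbb{F}_q}\lambda(a(\alpha+\alpha^{-1}))-1$, which is $q-1$ if $\alpha+\alpha^{-1}=0$ and $-1$ otherwise. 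The equation $\alpha+\alpha^{-1}=0$, i.e. $\alpha^2=-1$, has solutions in $\mathbb{F}_q^*$ iff $-1$ is a square in $\mathbb{F}_q$, iff $q\equiv 1\pmod 4$, iff $3^r\equiv 1\pmod 4$, iff $r$ is even (since $3\equiv -1\pmod 4$); in that case there are exactly two such $\alpha$. Assembling: for $r$ even, $SK=\tfrac12\bigl(2q-(q-1)\bigr)=\tfrac12(q+1)$; for $r$ odd, $SK=\tfrac12\bigl(0-(q-1)\bigr)=-\tfrac12(q-1)$. Both cases are packaged as $SK=\tfrac12\{(-1)^r q+1\}$.

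Finally, part $(2)$ is immediate: take $h=1$ and $\psi=\lambda$ in \eqref{b}, which reads $2SK=T_0SK+T_{12}SK$, so $T_0SK=2SK-T_{12}SK=\{(-1)^rq+1\}-\tfrac23(-1)^rq=\tfrac13(-1)^r q+1$. The only real obstacle is the one flagged above: making the $h=1$ specialization of \eqref{f}/\eqref{j} genuinely yield $(3)$ requires correctly tracking the low-weight terms $C_{1,1}$, $\hat C_1$ and the $j=0,1$ Stirling/binomial coefficients, and it is cleaner to obtain $(3)$ either by the direct character-sum evaluation of \eqref{l1} at $h=1$ or—most economically—by deriving $SK$ first as in part $(1)$, then $T_{12}SK^1$ from the $h=1$ instance of \eqref{b} together with an independent direct evaluation of $T_0SK^1=\sum_{tr\,a=0}K(\lambda;a^2)$; I will present whichever of these is shortest, and the remaining arithmetic is routine.
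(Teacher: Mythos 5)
Your parts (1) and (2) coincide with the paper's proof: the direct evaluation of $SK$ (replace $\alpha$ by $a\alpha$ inside $K(\lambda;a^2)$, swap the order of summation, and count the solutions of $\alpha+\alpha^{-1}=0$, which exist iff $-1$ is a square iff $r$ is even) is exactly the paper's computation, and (2) is read off from (\ref{b}) at $h=1$ in both. The problem is part (3). The paper obtains it precisely by the route you abandoned, namely the $h=1$ case of (\ref{f}) (or (\ref{j})), and your reason for abandoning it is a genuine error: it is false that $C_{1,1}=\hat C_1=0$. A weight-one word supported at coordinate $j$ lies in $C(G_1(q))$ exactly when $\mathrm{Tr}(g_j)=0$, since the defining condition $u\cdot v_1=0$ then only requires the single product $u_j\,\mathrm{Tr}(g_j)$ to vanish; surjectivity of $\mathrm{Tr}$ is beside the point, and what matters is that $\mathrm{Tr}$ vanishes at many group elements. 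By (\ref{b1}) there are $n_1(0)=q^2-q+q\,\delta(1,q;-1)=q^2$ such coordinates, so $C_{1,1}=2q^2$ (the factor $2$ from the choice $u_j\in\{1,2\}$), while $\hat C_1=2\bigl(q^2-q+q\,\delta(1,q;0)\bigr)=2\bigl(q^2+(-1)^rq\bigr)$ because $\delta(1,q;0)=1+(-1)^r$ by (\ref{y}). Hence $C_{1,1}-\hat C_1=-2(-1)^rq\neq 0$, and the $h=1$ instance of (\ref{f}) — where only $t=1$ survives since $S(1,0)=0$, the $j=0$ term cancels because $C_{1,0}=\hat C_0=1$, and the correct exponent is $2^{t-1-j}$ (you carried over $2^{t-2-j}$ from (\ref{j}) while keeping the $C_{1,j}-\hat C_j$ structure of (\ref{f})) — reads $\tfrac32\,T_{12}SK=-\tfrac12(C_{1,1}-\hat C_1)=(-1)^rq$, which is exactly assertion (3). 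There is no contradiction and no subtlety to evade.

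Your proposed fallbacks would in fact work: for instance, $T_0SK=\sum_{tra=0}K(\lambda;a^2)$ can be evaluated directly using $\sum_{tr(a)=0}\lambda(a\gamma)=\tfrac q3\,\abs{\{c\in\mathbb{F}_3:\gamma=-c\}}$ with $\gamma=\alpha+\alpha^{-1}$, giving $T_0SK=\tfrac13(-1)^rq+1$ and then (3) via (\ref{b}). But as written the proposal rests on a false computation of the weight-one frequencies, draws a spurious contradiction from it, and does not actually carry out any of the substitute arguments, so the proof of (3) is incomplete as it stands.
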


\bibliographystyle{amsplain}

\end{document}